\documentclass[11pt]{amsart}

%%%%% ---------------- AMS & Math Packages ---------------- %%%%%
\usepackage{amsmath, amssymb, amsfonts, amsthm, mathrsfs, mathtools}
\usepackage{thmtools}

%%%%% ---------------- Layout & Margins ---------------- %%%%%
\usepackage[hcentering, vcentering, total={5.9in, 8.2in}]{geometry} % 1.4in vertical, 1.3in horizontal

%%%%% ---------------- Graphics & Diagrams ---------------- %%%%%
\usepackage{graphicx}   % For images
\usepackage{tikz-cd}    % For commutative diagrams
\usetikzlibrary{cd}

\tikzset{
    symbol/.style={
    draw=none,
    every to/.append style={
    edge node={node [sloped, allow upside down, auto=false]{$#1$}}
    },
},
}

%%%%% ---------------- Comment ----------------%%%%%
\usepackage{comment}

%%%%% ---------------- Lists ---------------- %%%%%
\usepackage{enumerate}
\usepackage{enumitem}

%%%%% ---------------- Headers/Footers ---------------- %%%%%
\usepackage{fancyhdr}

%%%%% ---------------- Colors for Revision ---------------- %%%%%
\usepackage{xcolor}
        % New/changed
       % To be deleted
      % ToDo

\usepackage[colorlinks=true, linkcolor=blue, citecolor=red]{hyperref}

%%%%% ---------------- Theorem-like Environments ---------------- %%%%%
\theoremstyle{plain}
\newtheorem{theorem}{Theorem}[section]
\newtheorem{lemma}[theorem]{Lemma}
\newtheorem{proposition}[theorem]{Proposition}
\newtheorem{corollary}[theorem]{Corollary}

\newtheorem{claim}[theorem]{Claim}

\theoremstyle{definition}
\declaretheorem[style=definition,qed=$\triangle$,sibling=theorem]{definition}

\declaretheorem[style=definition,qed=$\triangle$,sibling=theorem]{remark}
\declaretheorem[style=definition,qed=$\triangle$,sibling=theorem]{question}

\theoremstyle{plain}

%%%%% ---------------- Hyperlinks ---------------- %%%%%

\usepackage[nameinlink,capitalize]{cleveref}  % Load AFTER hyperref
\pdfstringdefDisableCommands{%
  \let\cref\relax
}
\Crefname{question}{Question}{Questions}
\Crefname{claim}{Claim}{Claims}

%%%%% ---------------- Paper formatting ---------------- %%%%%

\setlength\parindent{0pt}
\setlength{\oddsidemargin}{5pt} \setlength{\evensidemargin}{5pt}
\setlength{\textwidth}{440pt}
\setlength{\topmargin}{-50pt}
\setlength{\textheight}{24cm}
\setlength{\parskip}{3mm plus0.4mm minus0.4mm}

%%%%% ---------------- Custom Math Commands ---------------- %%%%%

%mathbb
\def\R{\mathbb{R}}
\def\C{\mathbb{C}}
\def\Z{\mathbb{Z}}

\def\S{\mathbb{S}}

%mathcal
\def\D{\mathcal{D}}

%%%%

%operator
\def\d{\,\mathrm{d}}

\DeclareMathOperator{\Id}{Id}

\DeclareMathOperator{\img}{Img}
\DeclareMathOperator{\Diff}{Diff}

%%%%%%%%%%%%%%%%%%%%%%%%%%%%%%%%
\def\Imm{\mathfrak{I}\mathrm{mm}}
\def\Conv{\mathfrak{C}\mathrm{onv}}

%%%%%%%%%%%%%%%%%%%%%%%%%%%%%%%
\newcommand{\defi}{\coloneqq}

\newcommand{\st}{\, | \,}

%%%%%%%%%%%%%%%%%%%%%%%%%%%%%

\def\Op{\mathcal{O}p\,}
\def\std{\mathrm{std}}

\def\prol{\mathrm{prol}}
\def\derprol{\mathrm{der-prol}}

%%%%%%%%%%%%%%%%%%%%%%%%%%%%%%
\title{Engel CR submanifolds of $\mathbb{C}^3$}

\author{Eduardo Fern\'andez}
\address{Department of Mathematics\\ University of Georgia\\ Athens\\ GA, USA}
\email{eduardofernandez@uga.edu}

\author{\'Alvaro del Pino}
\address{Utrecht University, Department of Mathematics, Budapestlaan 6, 3584 CD Utrecht, The Netherlands}
\email{a.delpinogomez@uu.nl}

\author{Wei Zhou}
\address{Instituto de Ciencias Matemáticas, Consejo Superior de Investigaciones Científicas, Madrid, Spain}

\address{Departmento de Álgebra, Geometría y Topología, Facultad de Ciencias Matemáticas, Universidad Complutense de Madrid, Madrid, Spain}

\email{wzhou02@ucm.es}
\date{\today}

\begin{document}
\begin{abstract}
We give a sufficient condition for an $\mathbb{S}^1$-bundle over a $3$-manifold to admit an immersion (or embedding) into $\mathbb{C}^3$ so that its complex tangencies define an Engel structure. In particular, every oriented $\mathbb{S}^1$-bundle over a closed, oriented $3$-manifold admits such an immersion. If the bundle is trivial, this immersion can be chosen to be an embedding and, moreover, infinitely many pairwise smoothly non-isotopic embeddings of this type can be constructed. 

These are the first examples of compact submanifolds of $\mathbb{C}^3$ whose complex tangencies are Engel, answering a question of Y. Eliashberg.
\end{abstract}
\maketitle

\section{Introduction}

An Engel structure is a maximally non-integrable rank-2 distribution $\D \subseteq TN$ on a $4$\nobreakdash-manifold $N$. Just like contact structures, Engel structures are \emph{stable} \cite{VershikGershkovich1987,Montgomery1993}: they are given by an open partial differential relation and they admit a unique local normal form.

This lack of local invariants led to the conjecture that Engel structures should admit a topological theory, possibly abiding by the $h$-principle \cite[Intrigue]{cieliebak2024introduction}. A first positive result in this direction was due to Vogel \cite{VogelEngel}, who constructed an Engel structure in each parallelisable $4$-manifold. More recently, a series of results established the $h$-principle for various classes of Engel structures \cite{CPPP,CPP2020loose,PinoVogel_EngelLutz} and for various kinds of submanifolds within them \cite{AdachiLoops,GeigesLoops,CasalsPino_Engelknots,delPinoPresas2019,FMP_FundamentalHorizontal,gompf2022transverse,Kegel_NonIsotopicTransverseTori}. Engel structures have also been explored from a wide range of geometric perspectives \cite{Montgomery1999,PresasSolaConde2014,BeschastnyiMedvedev2016,Mitsumatsu2018,KotschickVogel2018,Yamazaki2019,Pia2022RiemannianEngel}.

In 2017, a workshop on Engel Topology was organised at AIM \cite{AIMEngel2017} and a list of open problems was assembled \cite{AIMEngel2017B}. Many of these went along the following line of research: \emph{What is the relationship between Engel and complex geometry?} Even though there has been progress in this direction \cite{Zhao2018Thesis,RuiNicola_ExoticHolEngel,NicolaGiovanni_EngelOnComplexSurface}, the following question, due to Y. Eliashberg, has remained open:
\begin{question} \label{ques:main}
Does there exist a compact $4$-dimensional submanifold $N \subseteq \C^3$ whose complex tangencies $\D_J := TN\cap J(TN)$ form an Engel distribution?    
\end{question}

From the viewpoint of Contact Topology, this line of reasoning is very natural: If $Y^{2n-1}\subseteq \C^n$ is a smooth real hypersurface, its complex tangencies $\xi_{J_\std} \defi TY\cap J_\std (TY)$ define a corank-one distribution that is contact precisely when $Y$ is strictly pseudoconvex; moreover, in this case, $(Y,\xi_{J_\std})$ is tight \cite{eliashbergFillingHolomorphic1990,Niederkrueger2006_plastikstufe, Presas2007_nonfillable,NiederkruegerVanKoert2007, cieliebakSteinWeinsteinBack2012,bormanExistenceClassificationOvertwisted2015}.

In this paper we show that the answer to  \cref{ques:main} is positive. This leads to the following, still very much open, question: can some form of \emph{rigidity} be established in Engel Topology using this relation to Complex Geometry?

\subsection{Statement of results}

Recall that $N^4 \subseteq \C^3$ is \emph{co-real} (also called \emph{CR-regular}) if $\D_J$ is a plane distribution. Remarkably, Gromov proved an $h$-principle for co-real embeddings \cite{gromovPartialDifferentialRelations1986, cieliebak2024introduction}, meaning that this condition is purely formal: $N$ admits a co-real embedding into $\C^3$ if and only if $N$ is parallelisable \cite{slapar2015cr}. This perfectly matches the fact that a closed orientable 4-manifold carries an orientable Engel structure precisely when it is parallelisable \cite{VogelEngel}. As such, from an algebraic topology viewpoint, these two geometric situations are equivalent.

More generally, one can consider co-real immersions $g: N^4 \looparrowright (X^6,J)$ of a $4$-manifold into an almost complex manifold of real dimension $6$. Co-reality means that the complex tangencies
\begin{equation*}
    \D^g_J\defi g_*^{-1}\big( g_*TN\cap J(g_*TN) \big)
\end{equation*}
have rank $2$.

In this article we focus on $4$-manifolds $N$ given as oriented $\S^1$-bundles over oriented $3$\nobreakdash-manifolds $M^3$. For these, we provide a sufficient condition to admit a co-real embedding/immersion into $(X,J)$ so that the complex tangencies are Engel. This sufficient condition is the existence of a \emph{convex bundle embedding/immersion} $g: N \looparrowright TM$; i.e., a bundle map that is fibrewise embedded/immersed and whose fibrewise derivative, when normalised, traces a convex curve on the sphere (see \cref{def:convex_der_curves} for further details). Our main result reads as follows:

\begin{theorem}\label{thm:main_thm}
    Let $N$ be an oriented $\S^1$-bundle over a closed, oriented $3$-manifold $M$ and let $g:N \looparrowright TM$ be a convex bundle immersion. Then, for any totally real immersion $\iota:M\looparrowright(X,J)$, into an almost complex manifold of real dimension $6$, there exists a co\nobreakdash-real immersion $\tilde{g}:N\looparrowright (X,J)$ whose complex tangencies $\D_{J}^{\tilde{g}}$ form an Engel distribution. Moreover, if $g$ and $\iota$ are embeddings, then $\tilde{g}$ is also an embedding.
\end{theorem}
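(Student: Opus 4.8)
The plan is to realise $\tilde g$ as a rescaled copy of $g$ post-composed with a normal-form model of $\iota$, and then to recognise the resulting complex tangency distribution as a $C^\infty$-small deformation of the prolongation distribution attached to $g$; that deformed distribution will be Engel precisely because $g$ is convex.

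\emph{A normal form for $\iota$ and reduction to $TM$.} Since $\iota$ is totally real and $\dim_\R X=2\dim M$, the (immersed) normal bundle of $\iota$ is identified with $J(\iota_*TM)$, so $\iota^*TX\cong\iota_*TM\oplus J(\iota_*TM)\cong TM\oplus TM$. The tubular neighbourhood theorem — applied to the immersion $\iota$, and yielding an honest embedding onto a neighbourhood of $\iota(M)$ when $\iota$ is an embedding — produces an immersion $\Phi\colon\Op(0_M)\subseteq TM\looparrowright X$ restricting to $\iota$ on the zero section $0_M$, which can moreover be chosen so that $d\Phi|_{0_M}$ identifies the horizontal and vertical summands of $T(TM)|_{0_M}\cong TM\oplus TM$ with $\iota_*TM$ and $J(\iota_*TM)$, respectively. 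Then $J_0\defi\Phi^*J$ is an almost complex structure on $\Op(0_M)$ whose restriction to $0_M$ is the standard swap $J_0(h,v)=(-v,h)$; and since $\Phi$ is an equidimensional immersion, hence a local diffeomorphism, for any map into $\Op(0_M)$ the complex tangencies computed with $J$ after post-composition with $\Phi$ agree with those computed intrinsically with $J_0$.

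\emph{Rescaling and the limiting distribution.} Let $s_\epsilon\colon TM\to TM$ be fibrewise multiplication by $\epsilon>0$. Since $N$ is compact, $s_\epsilon(g(N))\subseteq\Op(0_M)$ for small $\epsilon$, so $\tilde g\defi\Phi\circ s_\epsilon\circ g\colon N\looparrowright X$ is well defined; as $g$ is a bundle map covering $\Id_M$ and fibrewise immersed, $s_\epsilon\circ g$ — and hence $\tilde g$ — is an immersion, and an embedding when $g$ and $\iota$ are. Multiplying by a constant does not change the normalised fibrewise-derivative curve, so $s_\epsilon\circ g$ is again a convex bundle immersion, and by the previous paragraph it suffices to analyse $\D^{s_\epsilon\circ g}_{J_0}$ on $N$. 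Fixing a connection on $\pi\colon N\to M$ and writing out $\D^{s_\epsilon\circ g}_{J_0}(n)$ in the induced splittings of $TN$ and $T(TM)$, using that $J_0=J_0|_{0_M}+O(\epsilon)$ with all derivatives $O(\epsilon)$ along $s_\epsilon(g(N))$, a direct computation shows that, after normalising, $\D^{s_\epsilon\circ g}_{J_0}$ extends to a smooth family of rank-$2$ distributions on $N$ for $\epsilon$ in a neighbourhood of $0$, coinciding with $\D^{s_\epsilon\circ g}_{J_0}$ for $\epsilon\neq 0$ and equal at $\epsilon=0$ to the \emph{prolongation distribution} $\D^{\prol}_g$, where $\D^{\prol}_g(n)\defi(d\pi_n)^{-1}\!\big(\R\cdot\partial_\theta g(n)\big)$ and $\partial_\theta g(n)\in T_mM\setminus\{0\}$ is the fibrewise derivative. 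In particular $\tilde g$ is co-real for small $\epsilon$ and $\D^{\tilde g}_J\to\D^{\prol}_g$ in $C^\infty(N;\Gr_2(TN))$ as $\epsilon\to 0$.

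\emph{The prolongation is Engel, and conclusion.} In local coordinates $N\cong\S^1_\theta\times\R^3_x$ with $g(\theta,x)=(x,f(\theta,x))$ and $\pi(\theta,x)=x$, the distribution $\D^{\prol}_g$ is spanned by $\partial_\theta$ and $Z_1\defi\sum_i(\partial_\theta f)_i\partial_{x_i}$; brackets with $\partial_\theta$ produce the vector fields $Z_k\defi\sum_i(\partial_\theta^kf)_i\partial_{x_i}$, while the brackets $[Z_j,Z_k]$ stay tangent to the $\R^3_x$-factor, and hence $\D^{\prol}_g$ has growth vector $(2,3,4)$ — i.e.\ is Engel — precisely when $\partial_\theta f,\partial_\theta^2f,\partial_\theta^3f$ are everywhere linearly independent, which is exactly convexity of $g$ (cf.\ \cref{def:convex_der_curves}). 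Since being Engel is an open condition on rank-$2$ distributions in the $C^2$ topology and $\D^{\tilde g}_J\to\D^{\prol}_g$ in $C^\infty$, the distribution $\D^{\tilde g}_J$ is Engel for all sufficiently small $\epsilon>0$, which proves the theorem. The hard part will be the computation in the second step: because Engelness sees two derivatives of the distribution, one must establish $C^2$-convergence of $\D^{s_\epsilon\circ g}_{J_0}$ to the prolongation model rather than mere pointwise convergence — controlling the $O(\epsilon)$ deviation of $J_0$ from $J_0|_{0_M}$ and the vertical/horizontal bookkeeping at the level of $2$-jets, and checking that, although $s_0\circ g$ degenerates $N$ onto $0_M$, rescaling the appropriate frame vector by $\epsilon^{-1}$ recovers a non-degenerate rank-$2$ limit depending smoothly on $\epsilon$ down to $\epsilon=0$. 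The normal form and the bracket criterion are otherwise routine.
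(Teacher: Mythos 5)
Your overall strategy coincides with the paper's: realise $\tilde g$ as a fibrewise-rescaled copy of $g$ inside a tubular-neighbourhood model of $\iota$, identify the limit of the complex tangency distributions with the derived prolonged distribution $\D^g_\derprol$ (Engel by convexity, \cref{prop:conv_is_engel}), and conclude via $C^2$-openness of the Engel condition. The bracket criterion in your third step -- $\D^g_\derprol$ Engel iff $\partial_\theta f, \partial_\theta^2 f, \partial_\theta^3 f$ are pointwise independent -- is a correct local restatement of convexity, since $\det(u,u',u'') = |u|^3 \det(\hat u,\hat u',\hat u'')$ for $\hat u = u/|u|$.

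The gap, which you flag yourself, is the convergence claim at the heart of your second step. Because you rescale only the fibre direction, $s_\epsilon\circ g$ degenerates onto the zero section at $\epsilon=0$, and you then need the complex tangencies, as a family $N\to\Gr_2(TN)$, to extend smoothly past that degeneration. This requires tracking the interaction between the $O(\epsilon)$ deviation of $J_0$ from the swap and the vanishing vertical component of $(s_\epsilon\circ g)_*$ at the level of $2$-jets; ``rescaling the appropriate frame vector by $\epsilon^{-1}$'' is a plausible heuristic, not a proof, and the sign of the whole argument rests on it. The paper's zoom-in device is precisely the mechanism that sidesteps this. It works locally near each $p\in M$ (in coordinates from \cref{lem:local_coordinates}, where $J=J_\std$ along the zero section) and conjugates $g^\lambda = \delta_\lambda\circ g$ by the conformal rescaling $\Delta_\lambda(a,b)=(\lambda a,\lambda b)$ of \emph{both} base and fibre. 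In the new coordinates the immersion $\tilde g^\lambda(a,t)=(a,\gamma(\lambda a,t))$ stays nondegenerate for all $\lambda\ge 0$, and $J^\lambda_{(a,b)}=J_{(\lambda a,\lambda b)}\to J_\std$ in $C^\infty_{\mathrm{loc}}$; the complex tangency distributions then converge for the soft reason that they depend smoothly on a nondegenerate co-real immersion and the almost complex structure. The limit pair $(\tilde g^0,J_\std)$ is disposed of by the base-independent computation \cref{lem:BaseIndependent}, and compactness of $M$ finishes. Without the base rescaling, your convergence through the degeneration is genuinely nontrivial, and the proposal does not establish it.
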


The proof of \cref{thm:main_thm} is based on a zooming-in argument, which is somewhat reminiscent of Donaldson's zooming argument used in the construction of symplectic divisors \cite{donaldsonSymplecticSubmanifolds1996} and of the corrugation phenomenon in $h$-principle \cite{Smale1959,Theilliere2019}.

\begin{remark}
Recall that a submanifold of an almost complex manifold is \emph{totally real} when its tangent space contains no complex lines. Again, Gromov proved an $h$-principle for these, so the obstruction to its existence is purely formal \cite{gromovPartialDifferentialRelations1986,cieliebak2024introduction}. In particular, Forstnerič proved that a totally real embedding $\iota:M^3\hookrightarrow(X^6,J)$, as in \cref{thm:main_thm}, always exists \cite{forstnericTotallyRealEmbeddings1986}. 
\end{remark}

\cref{thm:main_thm} yields a broad class of $4$-manifolds that can be immersed, and infinitely of them embedded, into $\C^3$, so that the complex tangencies define an Engel structure:
\begin{theorem}\label{thm:main_result}
    Let $N$ be an oriented $\S^1$-bundle over a closed, oriented $3$-manifold $M$. Then, there exists a co-real immersion $g:N\looparrowright\C^3$ whose complex tangencies $\D^g_{J_\std}$ form an Engel distribution. If $N$ is the trivial bundle, then $g$ can be chosen to be an embedding.
\end{theorem}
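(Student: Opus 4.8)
The plan is to deduce \cref{thm:main_result} from \cref{thm:main_thm} applied with $(X,J)=(\C^3,J_\std)$, by supplying its two inputs. The totally real immersion of $M$ comes for free: a closed oriented $3$-manifold is parallelisable, so by Forstnerič's theorem \cite{forstnericTotallyRealEmbeddings1986} (recalled in the remark above) $M$ even admits a totally real embedding $\iota\colon M\hookrightarrow\C^3$. Thus the whole statement reduces to producing a convex bundle immersion $g\colon N\looparrowright TM$ for an arbitrary oriented $\S^1$-bundle $N\to M$, upgraded to a bundle embedding when $N=M\times\S^1$; combining such a $g$ with $\iota$ in \cref{thm:main_thm} then yields the desired co-real immersion (respectively embedding) of $N$ into $\C^3$ with Engel complex tangencies.

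The trivial case is easy. Fix an embedded closed curve $c\colon\S^1\hookrightarrow\R^3$ that is convex in the sense of \cref{def:convex_der_curves}, i.e.\ whose normalised derivative $c'/|c'|\colon\S^1\to\S^2$ traces a convex curve; such curves exist and one may write one down explicitly. Choosing an orientation-compatible parallelisation $TM\cong M\times\R^3$ and setting $g(x,\theta)\defi(x,c(\theta))$ produces a bundle map over $\mathrm{id}_M$ which is fibrewise an embedding and whose fibrewise derivative is $c'$; hence $g$ is a convex bundle embedding, and \cref{thm:main_thm} (embedding case) yields the asserted embedding $M\times\S^1\hookrightarrow\C^3$.

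For a general oriented $\S^1$-bundle $N\to M$ we again fix a parallelisation $TM\cong M\times\R^3$, turning a convex bundle immersion $g\colon N\looparrowright TM$ over $\mathrm{id}_M$ into a smooth map $G\colon N\to\R^3$ that restricts to a convex immersion on every fibre $N_x$. Let $h\in H^2(\S^2;\Z)$ be the Euler class of the Hopf bundle $\Hopf\colon\S^3\to\S^2$, a generator. Since the pair $(\CP^\infty,\S^2)=(K(\Z,2),\S^2)$ is $3$-connected, the map $[M,\S^2]\to H^2(M;\Z)$, $[f]\mapsto f^{*}h$, is surjective; fixing $f\colon M\to\S^2$ with $f^{*}h=e(N)$ and using that oriented $\S^1$-bundles over $M$ are classified by their Euler class, we identify $N$ with the $f$-pullback of the Hopf bundle, which furnishes a bundle map $F\colon N\to\S^3$ covering $f$ and restricting to orientation-preserving isomorphisms on fibres. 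It therefore suffices to produce a single smooth map $\Psi\colon\S^3\to\R^3$ restricting to a convex immersion on each Hopf fibre, for then $G\defi\Psi\circ F$ does the job; feeding the resulting $g$ together with $\iota$ into \cref{thm:main_thm} completes the proof.

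Constructing $\Psi$ is the crux and the step I expect to be the main obstacle. The naive attempt — stereographically projecting $\S^3$ and using the Villarceau-circle picture of the Hopf fibration — fails at once: one fibre becomes an unbounded line and, in any case, round circles have great-circle derivative curves and so are at best borderline-convex; more fundamentally, any $\Psi$ equivariant for the Hopf action would be forced to be constant along fibres, so the nontriviality of the Hopf bundle obstructs every equivariant model. A working $\Psi$ must therefore be non-equivariant, and I would build it through the identification of convex curves in $\R^3$ with nondegenerate loops in $\mathrm{SO}(3)=\RP^3$ representing the nontrivial class of $\pi_1(\mathrm{SO}(3))$ — the same class carried by the half Hopf circles $C/\{\pm 1\}\subset\RP^3$ — turning a global framing-type map $\S^3\to\mathrm{SO}(3)$ (available since $T\S^3$ is trivial) into a fibrewise family of convex curves via the antipodal symmetry of the Hopf fibration, and then perturbing so that every fibre becomes strictly convex. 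Verifying that such a perturbation can be carried out uniformly over the whole $\S^2$ of fibres, and that the curves stay convex (in particular non-hemispherical) throughout, is the delicate point; once $\Psi$ is available, the remainder is bookkeeping.
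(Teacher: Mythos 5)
Your overall reduction is the same as the paper's: apply \cref{thm:main_thm} with a totally real embedding $\iota\colon M\hookrightarrow\C^3$ (Forstneri\v{c}) and a convex bundle immersion $g\colon N\looparrowright TM$, the latter being the real content. Your treatment of the trivial case is correct and in fact cleaner than the paper's (a single explicit closed curve $c\colon\S^1\hookrightarrow\R^3$ with convex derivative curve, used as a product); the paper reaches the same place via Saldanha's machinery but the direct construction you cite is fine.

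For nontrivial $N$, however, there is a genuine gap, and you flag it yourself. Your universal-model strategy---classify $N$ by a map $f\colon M\to\S^2$, pull back the Hopf bundle via $F\colon N\to\S^3$, and set $G=\Psi\circ F$ for some fixed $\Psi\colon\S^3\to\R^3$ that restricts to a convex immersion on each Hopf fibre---is an elegant reduction, and the bundle-theoretic bookkeeping (surjectivity of $[M,\S^2]\to H^2(M;\Z)$, convexity of $\Psi|_{\Hopf^{-1}(p)}$ being preserved under the orientation-preserving fibre identifications of $F$) is correct. But the crux, the existence of $\Psi$, is exactly the analogue for the Hopf bundle $\S^3\to\S^2$ of the paper's \cref{prop:EngelDerivedProlongation}, and you do not prove it: the $\RP^3=\mathrm{SO}(3)$ sketch via nondegenerate loops is not carried out, you acknowledge the uniform-convexity verification is ``the delicate point,'' and the equivariance obstruction you note means no off-the-shelf symmetric model is available. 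What the paper actually does to close this gap is substantial: it first produces a bundle immersion $N\looparrowright \S TM$ by Hirsch--Smale (\cref{lem:S1-bundle_immersed_in_STM}, using parallelisability of $M$), then deforms the fibrewise curves to complicated convex curves via Saldanha's $h$-principle (\cref{prop:conv_h_principle}), then installs enough wiggles via grafting so the curves $n$-completely surround the origin (\cref{prop:complicated_can_completely_surrounded}, with a quantitative count $n=9$ to survive the chart-by-chart cutoffs), then applies Gromov's reparametrisation lemma (\cref{lem:Gromov_reparam}) to kill the total integral, and finally integrates and patches the local primitives by a partition of unity. None of this is replaced by your sketch. Your reduction to a universal $\Psi$ on $\S^3$ would, if proved, be a nice alternative (and one could plausibly prove it by running the paper's chart-by-chart argument over $\S^2$ with the trivial $\R^3$-bundle in place of $TM$), but as written the general case of \cref{thm:main_result} is not established.
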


We conjecture that $g$ can also be chosen to be an embedding in the case that $N$ has even Euler class. See \cref{remark:Conjecture} for further details.

In the construction of the embedding $g:M\times \S^1 \hookrightarrow \C^3$ of \cref{thm:main_result} we can prescribe the \emph{smooth isotopy class} of the underlying $g_{|M\times \{p\}}: M\hookrightarrow \C^3$. In particular, we can achieve knotting for $g$ by knotting $g_{|M\times \{p\}}$.
\begin{corollary}\label{cor:KnottedCREngelEmbeddings}
Let $M$ be a closed, oriented $3$-manifold. There exist infinitely many embeddings $g_k:M\times\S^1\hookrightarrow \C^3$, $k\in \Z$, that are pairwise non-isotopic and whose complex tangencies $\D^{g_k}_{J_\std}$ are Engel. Moreover, all these Engel structures $\D^{g_k}_{J_\std}$ are Engel homotopic.
\end{corollary}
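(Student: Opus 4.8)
The plan is to deduce \cref{cor:KnottedCREngelEmbeddings} from \cref{thm:main_result} (and the naturality of the construction behind it) together with the classical knotting of $3$-manifolds in $6$-space. Fix a totally real embedding $\iota_0 : M \hookrightarrow \C^3$ --- it exists by Forstnerič's theorem --- and let $\sigma$ generate Haefliger's group (isomorphic to $\Z$) of smooth isotopy classes of embedded $3$-spheres in $\S^6$. For $k\in\Z$ let $\iota_k : M \hookrightarrow \C^3$ be the embedded connected sum of $\iota_0$ with $k\sigma$, performed inside a small ball. By Haefliger's work, and for general $M$ by Skopenkov's classification of embeddings of $3$-manifolds in $6$-space, the $\iota_k$ are pairwise \emph{smoothly} non-isotopic. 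On the other hand the Haefliger knots are formally trivial --- i.e.\ regularly homotopic to the standard embedding; if need be one arranges this by passing to a subfamily, the relevant obstruction group $\pi_3(V_3(\R^6))$ being finite --- so $\iota_k$ is regularly homotopic to $\iota_0$. Hence $(\iota_k, d\iota_0)$, using the canonical trivialisation of $T\C^3$, is a legitimate formal totally real embedding, and by Gromov's $h$-principle for totally real embeddings it is realised by an honest totally real embedding $\iota'_k : M \hookrightarrow \C^3$ that is smoothly isotopic to $\iota_k$ --- so the $\iota'_k$ remain pairwise non-isotopic --- and regularly homotopic to $\iota_0$ \emph{through totally real immersions}.

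Next, feed $\iota = \iota'_k$ into the construction underlying \cref{thm:main_result}, with $N = M\times\S^1$ and the convex bundle embedding $M\times\S^1 \hookrightarrow TM$ supplied by that proof. As noted just before the corollary, this can be arranged so that the resulting co-real embedding $g_k : M\times\S^1 \hookrightarrow \C^3$, with Engel complex tangencies, restricts on $M\times\{p\}$ to an embedding smoothly isotopic to $\iota'_k$. Were $g_j$ and $g_k$ isotopic through embeddings of $M\times\S^1$, restricting the isotopy to $M\times\{p\}$ would give a smooth isotopy between $g_j|_{M\times\{p\}}$ and $g_k|_{M\times\{p\}}$, hence between $\iota'_j$ and $\iota'_k$ --- impossible for $j\neq k$. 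Thus the $g_k$ are pairwise non-isotopic.

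For the Engel-homotopy statement the key point is that the construction in \cref{thm:main_thm} is natural: with the $\S^1$-bundle and the convex bundle embedding held fixed, it admits a parametric version depending continuously on the totally real immersion $\iota$. Feeding a path of totally real immersions from $\iota'_k$ to $\iota_0$ --- which exists by the first paragraph --- through the construction produces a continuous path of co-real immersions $M\times\S^1 \looparrowright \C^3$ whose complex tangencies stay Engel, that is, a path of Engel structures on $M\times\S^1$ joining $\D^{g_k}_{J_\std}$ to $\D^{g_0}_{J_\std}$. Hence all the $\D^{g_k}_{J_\std}$ are Engel homotopic. (Alternatively: each $\D^{g_k}_{J_\std}$ should be a loose Engel structure --- which one expects from the zooming-in nature of the construction --- and they all carry the same formal Engel structure up to homotopy, the knotting being formally invisible; the $h$-principle for loose Engel structures \cite{CPPP,CPP2020loose,PinoVogel_EngelLutz} would then give the same conclusion.)

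The only genuinely new ingredient beyond \cref{thm:main_thm} is the Haefliger knotting; the rest is bookkeeping. I expect the main obstacle to be verifying the parametric dependence of the construction in \cref{thm:main_thm} on its totally real input --- this means rerunning the zooming-in argument with a parameter and checking that the output varies continuously and remains co-real with Engel complex tangencies throughout --- together with the (standard but slightly technical) point that the totally real representatives $\iota'_k$ can be chosen pairwise non-isotopic yet mutually regularly homotopic through totally real immersions.
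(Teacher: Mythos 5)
Your proposal is correct in outline and uses the same main ingredients as the paper for the non-isotopy part: Haefliger and Skopenkov to get infinitely many pairwise smoothly non-isotopic embeddings $M\hookrightarrow\C^3$, the $h$-principle for totally real embeddings (\cref{thm:every_3fold_TR_emb}) to make them totally real within their isotopy class, and the observation that restriction to $M\times\{p\}$ detects non-isotopy of the resulting $g_k$. One cosmetic difference: the paper starts not from an arbitrary totally real $\iota_0$ but from a Wall embedding $M^3\hookrightarrow\R^5\subseteq\C^3$ and forms ambient connected sums with the Haefliger spheres; this is the precise setting in which Skopenkov's non-isotopy result is stated, whereas your version ``$\iota_0\# k\sigma$'' needs the same citation anyway.

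For the Engel-homotopy claim you take a genuinely different and heavier route. You propose to connect the totally real representatives $\iota'_k$ by paths of totally real immersions (introducing a regular-homotopy / $\pi_3(V_3(\R^6))$ discussion) and then run a parametric version of the zooming argument. The paper's (largely implicit) argument is much lighter: by construction, every Engel structure $\D^{g_k}_{J_\std}$ produced by \cref{thm:main_thm} is a $C^2$-small deformation of one and the same distribution $\D^g_\derprol$ on $N=M\times\S^1$, which depends only on the fixed convex bundle immersion $g:N\looparrowright TM$ and not at all on the chosen totally real embedding $\iota_k$. Two distributions that are each $C^2$-close to the same Engel structure are Engel homotopic to it (hence to each other), so no parametric version of the construction, no regular-homotopy bookkeeping, and no $h$-principle for totally real \emph{immersions} is needed. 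Your approach would work once the parametric dependence of the zooming construction on $\iota$ is verified (and you correctly flag this as the main technical obstacle), but that verification is exactly the work the paper's argument avoids by anchoring all the Engel structures to $\D^g_\derprol$.
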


The Engel structures constructed in \cref{thm:main_thm}, \cref{thm:main_result} and \cref{cor:KnottedCREngelEmbeddings} are small deformations of the \emph{derived prolonged distribution} $\D_\derprol^g$ associated to the convex bundle immersion $g$ (see \cref{def:DerivedProlonged}, \cref{sec:preliminaries}, for further details). That is, we are not able to prescribe exactly the Engel structure that we obtain. This leads to the following open questions:
\begin{itemize}
    \item Suppose $N \subseteq \C^3$ is co-real with Engel complex tangencies $\D$. Can every perturbation of $\D$ be induced by a perturbation of the embedding $N \hookrightarrow \C^3$?
    \item Can the Cartan prolongation of a contact structure \cite{Cartan,Montgomery1999} be realized as the complex tangencies of a co-real submanifold in $\C^3$? 
\end{itemize}

\begin{remark}
    In the case where $N\subseteq \C^3$ is the $4$-torus, regarded as the trivial $\S^1$-bundle over the $3$-torus $M=\S^1\times \S^1 \times \S^1$, we can match the Engel structure defined by the complex tangencies and the derived prolonged distribution. This can be done by choosing a base\nobreakdash-independent (\cref{def:BaseIndependent}) convex embedding $g:N\hookrightarrow TM$, and the Clifford torus $\S^1\times \S^1 \times \S^1\subseteq \C\times\C\times\C=\C^3$ as a totally real embedding in \cref{thm:main_thm}. In this case, the derived prolonged distribution and the complex tangencies match, as proven in \cref{lem:BaseIndependent}.
\end{remark}

\subsection{Outline}

The article is organized as follows. In \cref{sec:preliminaries} we review the relevant background, including: (1) the notion of prolongation, which will be our main recipe to construct Engel structures, (2) the role of convexity as a criterion for Engelness, (3) the construction of tubular neighbourhood models around totally real submanifolds. In \cref{sec:integralZero} we discuss the construction of convex bundle immersions; this uses Saldanha's analysis of the homotopy type of the space of convex curves in the sphere. In \cref{sec:zoooming} we implement our zooming argument, establishing \cref{thm:main_thm}. These ingredients are combined in \cref{sec:proofMain} to prove \cref{thm:main_result} and \cref{cor:KnottedCREngelEmbeddings}.

\textbf{Acknowledgments:} AdP is funded by the NWO grant Vidi.223.118 “Flexibility and rigidity of tangent distributions". During this project, EF received support from an AMS-Simons Travel Grant. WZ is funded by the Spanish FPI predoctoral program PRE2020-092185. WZ further acknowledges support from the ICMAT project PID2022-142024NB-I00, which covered travel and local expenses related to his visits to AdP and EF. He is grateful to the Department of Mathematics at the University of Georgia and the Mathematical Institute of Utrecht University for their hospitality during his visits. Special thanks to T. Vogel, who many years ago told AdP about \cref{ques:main}.

%%%%%%%%%%%%%%%%%%%%%%%%%%%%%%%%%%%%%%%%%%%%%%%%%%%%%
\section{Preliminaries} \label{sec:preliminaries}

Throughout the paper, all manifolds, distributions, and maps are smooth. To avoid orientability issues, we assume all manifolds and bundles are equipped with fixed orientations.

\subsection{Engel structures}

A distribution $\D\subseteq TN$ on a manifold $N$ is a smooth subbundle of its tangent bundle. Given two distributions $\D_1,\D_2$, their \emph{Lie-bracket} is a module of vector fields:
\begin{equation*}
    [\D_1,\D_2] \defi \{[u_1,u_2] \st u_1\in\Gamma(\D_1), u_2\in\Gamma(\D_2) \}.
\end{equation*}
If $[\D_1,\D_2]$ has pointwise constant rank, it defines a distribution; we denote it by the same symbol. Throughout, we always assume that $[\D_1,\D_2]$ has constant rank.

\begin{definition}
Let $N$ be a $4$-manifold and $\D\subseteq TN$ a rank-$2$ distribution. We say that $\D$ is \emph{Engel} if $\D \subsetneq [\D,\D] \subsetneq [\D,[\D,\D]] = TN$.
\end{definition}
The Engel condition is local and $C^2$-open, so it persists under sufficiently small $C^2$ perturbations.

\subsection{Prolonged distributions}

Fix a closed, oriented $3$-manifold $M$ together with an oriented $\S^1$-bundle $\pi:E \to M$. Let $\S TM$ denote the sphere bundle of oriented lines over $M$. We consider the following class of rank-$2$ distributions \cite{Adachi_EngelTrivialCharFoliation, KlukasSahamie2011,delPino2018Prolongations}:
\begin{definition}
A \emph{curve family} over $M$ is a smooth fibred map 
    \begin{equation*}
        f:E\to \S TM.
    \end{equation*}
At each $p\in M$, let $f_p: E_p \to \S(T_pM)$ denote the restriction of $f$ to the fibres over $p$.
    
The \emph{prolonged distribution} associated to a curve family $f$ is the rank-$2$ distribution $\D_\prol^f \subseteq TE$ defined by
    \begin{align*}
        \D_\prol^f(q)\defi (\d\pi_q)^{-1}( L_q ),
    \end{align*}
    where $L_q \subseteq T_{\pi(q)}M$ is the oriented line corresponding to the point $f_p(q)\in \S(T_{\pi(q)}M)$.
\end{definition}

One way to construct a curve family over $M$ is to take the fibrewise derivative of a \emph{bundle immersion} $g: E \looparrowright TM$. More precisely, if $g_p: E_p \looparrowright T_pM$ is an immersion for all $p$, it induces a curve family via the formula
\begin{align*}
    \mathbf{t}(g)_p\defi \mathbf{t}(g_p):E_p& \to \S T_pM,\\
    t &\mapsto [(g_p)'(t)];
\end{align*}
where $[(g_p)'(t)]$ denotes the oriented line represented by $(g_p)'(t)$. The following notion may seem a bit unmotivated at this point, but it will become relevant in our main construction:
\begin{definition}\label{def:DerivedProlonged}
If $g:E \looparrowright TM$ is a bundle immersion, its \emph{derived prolonged distribution} is $\D_\derprol^g\defi\D_\prol^{\mathbf{t}(g)}$.
\end{definition}

\subsection{Engel prolongations}

We now recall a sufficient condition ensuring that a prolonged distribution $\D_\prol^f$ is Engel.
\begin{definition}
    A smooth immersion $\gamma: \S^1 \looparrowright \S^2 \subseteq \R^3$ is said to be  \emph{locally convex} (or simply \emph{convex}) if its geodesic curvature is strictly positive everywhere. That is, 
        \begin{equation*}
            \det(\gamma(t),\gamma'(t),\gamma''(t))>0,\quad \forall\ t\in \S^1,
        \end{equation*}
        with respect to the standard orientation of $\R^3$. 
\end{definition}

A straightforward computation shows that convexity is preserved under the action of $\operatorname{GL}^+(3)$ on $\S^2$. Therefore, the following is well-defined:
\begin{definition}\label{def:convex_der_curves}
A curve family $f: E \to \S TM$ is \emph{convex} if, for each $p \in M$, the curve $f_p: E_p \to \S T_pM$ is convex in the above sense.
    
A bundle immersion $g: E \looparrowright TM$ is \emph{convex} if its fibrewise derivative $\mathbf{t}(g)$ is convex.
\end{definition}

The following criterion will be crucial in our construction of Engel structures. It is a particular case of \cite[Proposition 7]{CPPP}:
\begin{proposition} \label{prop:conv_is_engel}
    Let $f:E\to \S TM$ be a curve family. The following statements hold:
    \begin{itemize}
        \item [(i)] If $f$ is a bundle immersion, then $\D_\prol^f$ is non-integrable.
        \item [(ii)] If $f$ is convex, then $\D_\prol^f$ is Engel.
    \end{itemize}
\end{proposition}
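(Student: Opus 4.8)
The plan is to reduce everything to an explicit computation of iterated Lie brackets in a frame adapted to the bundle, exploiting that non-integrability and the Engel growth vector are purely local conditions. First I would fix $p_0\in M$, choose coordinates $(x_1,x_2,x_3)$ on a chart $U\ni p_0$ and a local trivialisation $E|_U\cong U\times\S^1$ with fibre coordinate $s$, so that $\pi(x,s)=x$. Identifying each $\S T_xM$ with $\S^2\subseteq\R^3=T_xM$ through the coordinate basis, the curve family $f$ becomes a smooth map $(x,s)\mapsto\gamma_x(s)\in\S^2$ and the line $L_{(x,s)}$ is $\R\cdot\gamma_x(s)$. Since $\d\pi$ is the projection onto the $x$-factor, $\D_\prol^f$ is spanned on $U\times\S^1$ by the fibre field $W\defi\partial_s$ and the horizontal field $V\defi\sum_i\gamma_x^i(s)\,\partial_{x_i}$. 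I would note that changing the trivialisation reparametrises each $\gamma_x$ by an $x$-dependent constant shift, and changing the chart acts pointwise on $\S^2$ by an element of $\operatorname{GL}^+(3)$; neither affects immersedness, convexity, or the ranks computed below, so the local description is legitimate.

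Next I would compute the relevant brackets. One finds $[V,W]=-\sum_i(\partial_s\gamma_x^i)(s)\,\partial_{x_i}$, the horizontal lift (in this trivialisation) of the velocity curve $\gamma_x'$, and then $[W,[V,W]]$ is, up to sign, the horizontal lift of $\gamma_x''$. Because $\D_\prol^f$ has rank $2$, its first bracket $[\D_\prol^f,\D_\prol^f]$ is the module generated by $V,W,[V,W]$, and its second bracket $[\D_\prol^f,[\D_\prol^f,\D_\prol^f]]$ is generated by these together with $[V,[V,W]]$ and $[W,[V,W]]$; in particular the latter contains $W$ and the horizontal lifts of $\gamma_x,\gamma_x',\gamma_x''$.

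Then I would read off the two statements. For (i): a bundle immersion has $\gamma_x'(s)\neq0$ for all $(x,s)$, and since $\gamma_x$ takes values in $\S^2$ the velocity $\gamma_x'(s)$ is orthogonal to $\gamma_x(s)$; hence $\{V,W,[V,W]\}$ is pointwise linearly independent, so $[\D_\prol^f,\D_\prol^f]$ has constant rank $3$ and $\D_\prol^f$ is non-integrable. For (ii): convexity says $\det(\gamma_x(s),\gamma_x'(s),\gamma_x''(s))>0$ everywhere, which in particular makes $f$ a bundle immersion, so by (i) the first bracket has constant rank $3$; moreover the horizontal lifts of $\gamma_x,\gamma_x',\gamma_x''$ together with $W=\partial_s$ form a frame of $T_{(x,s)}(U\times\S^1)$. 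Thus $[\D_\prol^f,[\D_\prol^f,\D_\prol^f]]=TE$ and the growth vector of $\D_\prol^f$ is exactly $(2,3,4)$, i.e.\ $\D_\prol^f$ is Engel.

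The computation itself is routine; the only points that take a little care, and which I regard as the main (mild) obstacle, are verifying that the first and second bracket modules are genuinely represented by the vector fields listed and have locally constant rank — which is precisely what immersedness, respectively convexity, guarantees, so that they define honest distributions in the sense used here — and spelling out the invariance remarks that justify passing to a local chart and trivialisation. I do not anticipate any deeper difficulty.
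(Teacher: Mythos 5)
Your computation is correct, and it is the standard argument; note that the paper itself does not supply a proof but delegates to \cite[Proposition~7]{CPPP}, so you have effectively reconstructed the content of the cited result. The local frame $V=\sum_i\gamma^i_x(s)\,\partial_{x_i}$, $W=\partial_s$ of $\D^f_\prol$ is well chosen, the brackets $[V,W]$ and $[W,[V,W]]$ are indeed $(-1)$ times the horizontal lifts of $\gamma'_x$ and $\gamma''_x$, and the module identities $[\D,\D]=\langle V,W,[V,W]\rangle$ and $[\D,[\D,\D]]=\langle V,W,[V,W],[V,[V,W]],[W,[V,W]]\rangle$ are exactly what is needed: immersedness (i.e.\ $\gamma'_x\neq 0$, together with $\gamma_x\perp\gamma'_x$ since $\gamma_x$ is spherical) gives rank exactly $3$ for the first module, and convexity gives rank $4$ for the second, so the growth vector is $(2,3,4)$ as required. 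Two very minor remarks, neither of which affects the argument: (a) the identification $\S T_xM\cong\S^2$ depends on the coordinate basis, but the paper has already observed that convexity is $\operatorname{GL}^+(3)$-invariant and the lines $L_q$, hence $\D^f_\prol$, are defined without any such choice, so the frame is legitimate; (b) a change of local trivialisation of an $\S^1$-bundle is a priori an $x$-dependent orientation-preserving diffeomorphism of $\S^1$, not merely a rotation as you say, though this is harmless both because the structure group can always be reduced to $\operatorname{SO}(2)$ and because the entire proof is local in one trivialisation anyway.
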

Do note that, according to \cite[Proposition 7]{CPPP}, the first statement in \cref{prop:conv_is_engel} is an if and only if, but the second one is not.

\begin{corollary} \label{cor:ConvexBundleImmersion}
If $g: E \looparrowright TM$ is a convex bundle immersion, then $\D_\derprol^g$ is an Engel distribution on $E$.
\end{corollary}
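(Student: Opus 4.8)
The plan is simply to unwind the two definitions involved and quote \cref{prop:conv_is_engel}. By \cref{def:DerivedProlonged}, $\D_\derprol^g$ is \emph{by definition} the prolonged distribution $\D_\prol^{\mathbf{t}(g)}$ attached to the curve family $\mathbf{t}(g)$ given by the fibrewise derivative of $g$; and by \cref{def:convex_der_curves}, saying that the bundle immersion $g$ is convex means \emph{exactly} that $\mathbf{t}(g)$ is a convex curve family. So the statement is an instance of \cref{prop:conv_is_engel}(ii).

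In slightly more detail: first I would note that $\mathbf{t}(g)$ really is a curve family in the required sense, i.e.\ a smooth fibred map $E \to \S TM$. Indeed, since $g$ is a bundle immersion each $g_p\colon E_p \cong \S^1 \looparrowright T_pM$ has nowhere-vanishing derivative, so $t \mapsto [(g_p)'(t)]$ is a well-defined point of $\S T_pM$ depending smoothly on $(p,t)$. Then, by hypothesis and \cref{def:convex_der_curves}, for each $p$ the loop $\mathbf{t}(g)_p = \mathbf{t}(g_p)$ is a locally convex immersed curve in $\S T_pM \cong \S^2$, which is precisely the condition that the curve family $\mathbf{t}(g)$ be convex. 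Applying \cref{prop:conv_is_engel}(ii) with $f = \mathbf{t}(g)$ gives that $\D_\prol^{\mathbf{t}(g)} = \D_\derprol^g$ is Engel.

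I do not anticipate any obstacle: all of the content is already packaged in \cref{prop:conv_is_engel} (itself drawn from \cite[Proposition 7]{CPPP}), and the corollary only records the special case relevant to bundle immersions, which is the form used in the rest of the paper. A self-contained proof would instead carry out the pointwise geodesic-curvature computation that turns convexity of $\mathbf{t}(g_p)$ into the flag $\D \subsetneq [\D,\D] \subsetneq [\D,[\D,\D]] = TE$, but since that computation is cited as available there is nothing further to do.
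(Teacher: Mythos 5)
Your proposal is correct and matches the paper's intent exactly: the corollary is stated without proof precisely because it is the immediate combination of \cref{def:DerivedProlonged}, \cref{def:convex_der_curves}, and \cref{prop:conv_is_engel}(ii), which is what you spell out.
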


\subsection{A tubular neighbourhood for totally real submanifolds}

Fix an oriented, closed $n$-dimensional manifold $M$. We write $M \subseteq TM$ for the zero section. Then $T(TM)$ canonically splits along $M$:
\begin{equation*}
    T(TM)|_M \simeq TM \oplus TM =: \mathrm{Hor} \oplus \mathrm{Vert}.
\end{equation*}

This allows us to consider:
\begin{definition}
An almost complex structure $J$ on $TM$ is called \emph{splitting-adapted} if, along the zero section, $J: \mathrm{Hor} \simeq TM \rightarrow \mathrm{Vert} \simeq TM$ is the identity isomorphism.
\end{definition}

Which provides a model for the tubular neighbourhood of middle-dimensional totally real submanifolds:
\begin{lemma}\label{lem:tangent_bundle}
Let $\iota:M^n\looparrowright (X^{2n},J)$ be a totally real immersion (resp. embedding). There exists an immersion (resp. embedding) of the tangent bundle $\varphi: TM \looparrowright X$, such that:
    \begin{enumerate}[label=(\roman*)]
        \item $\varphi|_M = \iota$;
        \item The pullback almost complex structure $\varphi^*(J)$ on $TM$ is splitting-adapted.
    \end{enumerate}
\end{lemma}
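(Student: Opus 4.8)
The plan is to realise $TM$ as a tubular neighbourhood of $\iota(M)$ in $X$, using $J$ itself to manufacture the normal directions. The starting point is that, since $\iota$ is totally real and $\dim_\R M = \tfrac12\dim_\R X$, we have $\iota_*T_pM \cap J(\iota_*T_pM) = 0$ for every $p\in M$, hence a splitting
\[
    \iota^*TX \;\isom\; \iota_*TM \;\oplus\; J(\iota_*TM)
\]
along $\iota(M)$. In particular $J\circ\iota_*\colon TM \to \iota^*TX$ is a bundle monomorphism whose image is a complement to $\iota_*TM$; this is the copy of the normal bundle that we will exponentiate.

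Concretely, I would first fix an auxiliary Riemannian metric $g$ on $X$ and set
\[
    \Psi(v) \defi \exp^g_{\iota(p)}\!\big( J\,\iota_*v \big), \qquad v \in T_pM.
\]
By compactness of $M$, this is defined and smooth on some neighbourhood $U$ of the zero section in $TM$. Differentiating along the zero section, and using $\d(\exp^g_q)_0 = \Id$, one gets $\Psi|_M = \iota$ together with
\[
    \d\Psi|_M(\text{hor}_v) = \iota_*v, \qquad \d\Psi|_M(\text{vert}_v) = J\,\iota_*v
\]
relative to the canonical splitting $T(TM)|_M \isom \mathrm{Hor}\oplus\mathrm{Vert} \isom TM\oplus TM$. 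By total reality the right-hand sides span $\iota^*TX$, so $\d\Psi|_M$ is a fibrewise isomorphism (note $\dim TM = \dim X$), whence $\Psi$ is a local diffeomorphism, in particular an immersion, after shrinking $U$. If $\iota$ is an embedding, then shrinking $U$ further makes $\Psi|_U$ an embedding, by the usual tubular-neighbourhood compactness argument: otherwise one extracts $x_k\neq y_k$ converging to the zero section with $\Psi(x_k)=\Psi(y_k)$, which, since $\iota$ is injective, forces $x_k,y_k$ to lie over the same point of $M$, contradicting local injectivity of $\Psi$ near the zero section.

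It remains to promote $\Psi$, defined only near $M$, to a map on all of $TM$. For this I would pick a fibrewise diffeomorphism $\beta\colon TM \to U$ onto a neighbourhood of the zero section contained in $U$ (e.g.\ a radial rescaling) that equals the identity near the zero section, and put $\varphi \defi \Psi\circ\beta$. Then $\varphi$ is an immersion (resp.\ embedding) on all of $TM$ and has the same $1$-jet along $M$ as $\Psi$; in particular $\varphi|_M = \iota$, which is $(i)$. For $(ii)$, observe that $\varphi^*(J)$ is a genuine almost complex structure on $TM$ because $\d\varphi$ is a fibrewise isomorphism, and that its value along $M$ depends only on $\d\varphi|_M$. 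Since $\d\varphi|_M(\text{hor}_v)=\iota_*v$ and $\d\varphi|_M(\text{vert}_v)=J\,\iota_*v$, we get $J\cdot\d\varphi|_M(\text{hor}_v)=\d\varphi|_M(\text{vert}_v)$, i.e.\ $\varphi^*(J)$ sends $\text{hor}_v$ to $\text{vert}_v$; that is exactly the statement that $\varphi^*(J)$ is splitting-adapted.

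The argument is essentially formal. The only points that require a little care are the embedding claim for $\Psi$ (the standard compactness argument above) and the bookkeeping of the horizontal/vertical splitting of $T(TM)|_M$ when differentiating $\Psi$ along the zero section; I do not expect a genuine obstacle.
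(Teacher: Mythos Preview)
Your argument is correct and follows essentially the same approach as the paper: one uses the totally real splitting $\iota^*TX \isom \iota_*TM \oplus J(\iota_*TM)$ and the exponential map of an auxiliary metric to produce the tubular neighbourhood map $\varphi(v)=\exp_{\iota(p)}(J\iota_*v)$, then checks the splitting-adapted condition from the computation of $\d\varphi|_M$. The paper's proof is terser (it omits the radial rescaling $\beta$ and the compactness argument for the embedding case, deeming them ``straightforward''), but the construction is identical.
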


\begin{proof}
For notational simplicity assume that $M \subseteq X$ is a submanifold and $\iota$ is thus the inclusion. Since $M$ is totally real, the tangent bundle $TX$ along $M$ splits as $TX|_M = TM \oplus J(TM)$. Choosing an auxiliary Riemannian metric on $X$ yields then the tubular neighbourhood embedding 
    \begin{align*}
        \varphi: TM &\looparrowright (X,J), \\
        q &\longmapsto \exp_p\big(J_p(q)\big),
    \end{align*}
where $p = \operatorname{proj}_{TM}(q)$. It is straightforward to check that $\varphi$ satisfies the conclusions of the lemma.
\end{proof}

One can then take a local chart to deduce that:
\begin{corollary} \label{lem:local_coordinates}
Consider $(TM,J)$ with $J$ splitting-adapted and fix a point $p \in M$. There are base $(x_1,\ldots,x_n)$ and fibre coordinates $(y_1,\ldots,y_n)$ around $p$ such that $J = J_\std$ along the zero section $\R^n_x \subseteq T\R^n_x = \R^n_x \times \R^n_y = \C^n$. Here $J_\std$ denotes the standard complex structure on $\C^n$.
\end{corollary}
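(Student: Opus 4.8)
The plan is to simply transport a coordinate chart from the base to $TM$ and then read off $J$ directly from the definition of splitting-adapted. First I would pick any smooth chart $(x_1,\dots,x_n)\colon U\to\R^n$ on $M$ centred at $p$. This induces the associated vector bundle chart on $TM|_U$: a tangent vector $v\in T_qM$ is written uniquely as $v=\sum_i y_i(v)\,\partial_{x_i}|_q$, which defines the fibre coordinates $(y_1,\dots,y_n)$, so that $TM|_U\cong\R^n_x\times\R^n_y$ with the zero section cut out by $\{y=0\}$.

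Next I would express the canonical splitting $T(TM)|_M=\mathrm{Hor}\oplus\mathrm{Vert}$ in these coordinates. Along the zero section, $\mathrm{Hor}$ is spanned by $\{\partial_{x_i}\}$ (these map isomorphically onto $TM$ under $\d\pi$), while $\mathrm{Vert}$ is spanned by $\{\partial_{y_i}\}$ (the tangents to the fibres). Moreover, under the canonical isomorphism $\d\pi\colon\mathrm{Hor}\to TM$ the vector $\partial_{x_i}$ corresponds to $\partial_{x_i}|_M$, and under the canonical isomorphism $\mathrm{Vert}\cong TM$ (a fibre being a vector space, canonically identified with its own tangent space) the vector $\partial_{y_i}$ also corresponds to $\partial_{x_i}|_M$. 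Hence the ``identity isomorphism'' $\mathrm{Hor}\cong TM\cong\mathrm{Vert}$ appearing in the definition of splitting-adapted is, in these coordinates, precisely $\partial_{x_i}\mapsto\partial_{y_i}$.

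Then the splitting-adapted hypothesis says exactly that $J(\partial_{x_i})=\partial_{y_i}$ along $\{y=0\}$; applying $J$ once more and using $J^2=-\Id$ gives $J(\partial_{y_i})=-\partial_{x_i}$ there as well. Writing $\C^n=\R^n_x\times\R^n_y$ with complex coordinates $z_j=x_j+\sqrt{-1}\,y_j$, these two identities are precisely the assertion that $J=J_\std$ along the zero section, which is the claim.

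I do not expect a genuine obstacle here; the corollary is essentially a bookkeeping consequence of \cref{lem:tangent_bundle}. The one point that deserves care is unwinding what the ``identity isomorphism'' between $\mathrm{Hor}$ and $\mathrm{Vert}$ means once a chart has been fixed, i.e. checking that the coordinate frame $\{\partial_{x_i},\partial_{y_i}\}$ is adapted to the canonical horizontal--vertical splitting and that the two identifications with $TM$ agree on it. This, however, is immediate from the definition of the induced bundle chart, so the proof is short.
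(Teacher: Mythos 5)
Your proof is correct and fills in exactly the chart-level bookkeeping the paper leaves implicit (it only says ``One can then take a local chart to deduce that''). You correctly identify that the induced bundle chart makes $\{\partial_{x_i}\}$ and $\{\partial_{y_i}\}$ adapted to the canonical splitting with both identifications to $TM$ sending the $i$th frame vector to $\partial_{x_i}|_M$, so splitting-adapted unwinds to $J(\partial_{x_i})=\partial_{y_i}$, and $J^2=-\Id$ gives the rest.
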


%%%%%%%%%%%%%%%%%%%%%%%%%%%%%%%%%%%%%%%%%%%%%%%%%%%%%%%%%%
\section{Convex curves with zero integral} \label{sec:integralZero}

Our goal in this section is to construct Engel derived prolongations, as in  \cref{cor:ConvexBundleImmersion}. The statement to prove reads:
\begin{proposition} \label{prop:EngelDerivedProlongation}
Let $N$ be an oriented $\S^1$-bundle over a closed, oriented $3$-manifold $M$. Then a convex bundle immersion $g: N \looparrowright TM$ exists. If $N$ is the trivial bundle, the map $g$ can be assumed to be an embedding.
\end{proposition}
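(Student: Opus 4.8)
The plan is to translate the statement into a parametric problem about spherical curves and then solve it using Saldanha's computation of the homotopy type of the space of locally convex curves in $\S^2$. Since $M$ is a closed oriented $3$-manifold it is parallelisable, so fix a trivialisation $TM\cong M\times\R^3$; because convexity is $\operatorname{GL}^+(3)$-invariant this choice is harmless. Under it, a bundle immersion $g:N\looparrowright TM$ is the same as a map $\bar g:N\to\R^3$ whose restriction $\bar g_p:N_p\to\R^3$ to each oriented circle fibre is an immersion, and by \cref{def:convex_der_curves} it is convex exactly when the tangent indicatrix $\mathbf{t}(\bar g_p)=\bar g_p'/|\bar g_p'|:N_p\to\S^2$ is a locally convex spherical curve for every $p$. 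As $\bar g_p$ is a map out of a circle it is automatically closed, so after a fibrewise reparametrisation its indicatrix has vanishing integral. Writing $\mathcal{L}_0$ for the space of locally convex closed curves $\gamma:\S^1\looparrowright\S^2$ with $\int_{\S^1}\gamma=0$ (equivalently, up to reparametrisation, the tangent indicatrices of closed space curves), a convex bundle immersion is therefore precisely a section of the bundle over $M$ with fibre $\mathcal{L}_0$ associated to the principal $\S^1$-bundle $N$ through the rotation action of $\S^1$ on $\mathcal{L}_0$. For the trivial bundle $N=M\times\S^1$ one just takes a constant section $\bar g(p,t)=c_0(t)$, and if $c_0:\S^1\hookrightarrow\R^3$ is an \emph{embedded} curve with $\det(c_0',c_0'',c_0''')>0$ everywhere then this $g$ is an embedding, settling the last claim.

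Two inputs about spherical curves are needed. First, $\mathcal{L}_0\ne\emptyset$: equivalently, there exists a closed space curve with everywhere positive curvature and torsion. I would build one explicitly as a two-lobed curve, two almost-planar loops placed in roughly antipodal position so that their contributions to $\int\gamma$ cancel, joined by short locally convex bridges; a $C^3$-small correction then makes the integral exactly zero and the primitive embedded, which also produces the curve $c_0$ needed above. Second, the homotopy types of the components of the space of locally convex closed spherical curves, computed by Little and refined by Saldanha, give the low-degree homotopy groups $\pi_0,\pi_1,\pi_2$ of $\mathcal{L}_0$ and identify the class $\omega\in\pi_1(\mathcal{L}_0)$ of the reparametrisation orbit $\theta\mapsto\gamma_0\circ R_\theta$ of a model curve $\gamma_0$: this class is detected by the loop traced in $\operatorname{SO}(3)$ by the Frenet frame of $\gamma_0$, so it vanishes when that frame winds an even number of times — in particular for the two-lobed curve above. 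Since $N$ has connected structure group $\S^1$, the monodromy $\pi_1(M)\to\pi_0(\S^1)$ is trivial, so all the coefficient systems appearing below are untwisted.

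The section is then assembled over a CW-structure of $M$. As $\mathcal{L}_0$ is connected, a section exists over the $1$-skeleton. The obstruction to extending over the $2$-skeleton lies in $H^2(M;\pi_1\mathcal{L}_0)$ and is the image of the Euler class $e(N)\in H^2(M;\Z)$ under the coefficient homomorphism $\Z\to\pi_1\mathcal{L}_0$ sending $1\mapsto\omega$; it vanishes because $\omega=0$. The remaining obstruction lies in $H^3(M;\pi_2\mathcal{L}_0)$ and is annihilated using the freedom left in the section over the $2$-skeleton, where the precise form of $\pi_2\mathcal{L}_0$ extracted from Saldanha's description enters. A more geometric variant of the same argument: represent the Poincaré dual of $e(N)$ by an embedded link $L\subset M$, use the constant section over $M\setminus\nu(L)$, and extend it across $\nu(L)\cong L\times D^2$ using that the reparametrisation monodromy around the meridians of $L$ bounds a disc in $\mathcal{L}_0$, again because $\omega=0$.

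I expect the main obstacle to be the homotopy-theoretic bookkeeping around $\mathcal{L}_0$: transporting Saldanha's results, which are stated for spaces of locally convex \emph{arcs} with prescribed endpoint frames, to the closed and zero-integral setting (via the fibration that evaluates the Frenet frame at the basepoint, whose base is $\operatorname{SO}(3)$), placing the model curve in the component for which $\omega=0$, and checking that the top obstruction in $H^3(M;\pi_2\mathcal{L}_0)$ really does vanish. The explicit construction and perturbation yielding the embedded model curve $c_0$ is elementary but needs to be done carefully.
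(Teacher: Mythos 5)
Your reduction — trivialise $TM$, pass to fibrewise tangent indicatrices, and view a convex bundle immersion as (roughly) a section of the associated bundle with fibre the zero-integral convex curves $\mathcal{L}_0$ — is in the right spirit, and your treatment of the trivial bundle and the embedded model curve $c_0$ matches the paper. But the core of your argument is obstruction theory with coefficients $\pi_1(\mathcal{L}_0)$ and $\pi_2(\mathcal{L}_0)$, and this is where there is a genuine gap. Saldanha's computations are for the full space of locally convex closed curves (or for arcs with prescribed endpoint data); the zero-integral condition cuts out a subspace whose homotopy type is \emph{not} established in the literature, and the passage from $\mathcal{L}$ to $\mathcal{L}_0$ is not a routine fibration argument: the ``strictly surrounding'' curves, to which Gromov's reparametrisation applies, are a proper open subset of $\mathcal{L}$, and showing one can parametrically deform convex curves into this subset (relative to a subfamily already there) is precisely the nontrivial content of \cref{prop:complicated_can_completely_surrounded}, which the paper proves by a grafting/wiggle-bookkeeping argument following Saldanha. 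You flag this transport as ``the main obstacle'' but offer no resolution; it is not a bookkeeping issue but a new geometric input.

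Even granting knowledge of $\pi_*(\mathcal{L}_0)$ and $\omega=0$, your treatment of the secondary obstruction is not an argument. Saying it is ``annihilated using the freedom left in the section over the $2$-skeleton'' is exactly the statement one needs to prove: for a general fibration the secondary obstruction in $H^3(M;\pi_2\mathcal{L}_0)$ need not be killable by changing the $2$-skeleton lift, and $\pi_2$ of the relevant Saldanha components is nonzero (they contain $\Omega\S^3$ factors). Your ``geometric variant'' over a tubular neighbourhood $L\times D^2$ of the Poincaré dual of $e(N)$ has the same issue: you need a \emph{family} over $L$ of nullhomotopies of the meridian loop, and the obstruction to that is again a $\pi_2(\mathcal{L}_0)$-valued class. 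The paper sidesteps all of this by never computing homotopy groups: it first produces a bundle immersion $f:N\looparrowright \S TM$ via the fibrewise Hirsch--Smale $h$-principle (\cref{lem:S1-bundle_immersed_in_STM}) — a formal step your proposal replaces by obstruction theory — and then deforms $f$ chart by chart into convex, completely-surrounding, zero-integral curve families using the full parametric, relative $h$-principle of Saldanha (\cref{prop:conv_h_principle}), the grafting lemma (\cref{prop:complicated_can_completely_surrounded}), and Gromov's reparametrisation lemma (\cref{lem:Gromov_reparam}), with the wiggle count tracking that the surrounding condition survives each chart. Finally, there is a smaller imprecision worth noting: the correspondence between convex bundle immersions and sections of the $\mathcal{L}_0$-bundle involves both an arclength reparametrisation (not $\S^1$-equivariant on the nose) and a choice of primitive; the paper handles the latter explicitly via the partition-of-unity correction at the end of its proof, whereas your sketch quietly identifies the two spaces.
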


We will make use of the following auxiliary result:
\begin{lemma}\label{lem:S1-bundle_immersed_in_STM}
Let $N$ be an oriented $\S^1$-bundle over a closed, oriented $3$-manifold $M$. Then, there exists a bundle immersion $f: N \looparrowright \S TM$. If the Euler class $e(N)$ is even then $f$ can be chosen to be a bundle embedding. 
\end{lemma}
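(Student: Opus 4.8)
The plan is to trade the problem for a statement about $\S^2$-bundles. Since a closed oriented $3$-manifold is parallelisable, I would fix a trivialisation $TM\cong M\times\R^3$; it induces a bundle isomorphism $\S TM\cong M\times\S^2$, so it suffices to produce a bundle map $f\colon N\to M\times\S^2$ over $M$ that is fibrewise an immersion, and fibrewise an embedding when $e(N)$ is even. I would also realise $N$ as the unit circle bundle $\S(L)$ of a Hermitian line bundle $L\to M$ with $c_1(L)=e(N)$, so that $N$ becomes a principal $\S^1$-bundle; the map $f$ will then be assembled from a single model curve in $\S^2$ rotated along the fibres.

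Concretely, for $k\geq 1$ let $\rho_k\colon\S^1\to\mathrm{SO}(3)$ send $e^{i\theta}$ to the rotation of $\R^3$ by angle $k\theta$ about a fixed axis, and view it as an action on $\S^2$. Let $E_k\defi N\times_{\S^1}\S^2$ be the associated $\S^2$-bundle over $M$; since $\rho_k$ acts on $\R^3$ orthogonally, $E_k$ is the unit sphere bundle of the oriented rank-$3$ bundle $N\times_{\S^1}\R^3\cong L^{\otimes k}\oplus\underline{\R}$. Choosing a point $x_0\in\S^2$ off the rotation axis, I would define the bundle map $\Phi\colon N\to E_k$ by $\Phi(n)\defi[n,x_0]$. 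Over a point $p\in M$, a choice of $n_0\in N_p$ trivialises $N_p\cong\S^1$ and $(E_k)_p\cong\S^2$ and identifies $\Phi|_{N_p}$ with the loop $e^{i\theta}\mapsto\rho_k(e^{i\theta})x_0$, which runs $k$ times around the latitude circle through $x_0$. Hence $\Phi$ is fibrewise an immersion for all $k\geq 1$; and for $k=1$ it is fibrewise an embedding, and in fact a global embedding, because the $\rho_1$-stabiliser of $x_0$ is trivial so $\Phi$ is injective (and $N$ is compact).

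The key step is to identify $E_k$ with the trivial $\S^2$-bundle for the right choice of $k$. Here $w_2\big(L^{\otimes k}\oplus\underline{\R}\big)=c_1(L^{\otimes k})\bmod 2=(k\cdot e(N))\bmod 2$. For $k=2$ this vanishes unconditionally; for $k=1$ it equals $e(N)\bmod 2$, which vanishes precisely under the parity hypothesis. In either case, an oriented rank-$3$ real bundle with $w_2=0$ over the $3$-complex $M$ admits a $\mathrm{Spin}(3)\cong\S^3$-structure, and principal $\S^3$-bundles over a $3$-complex are trivial (because $B\S^3$ is $3$-connected); so $L^{\otimes k}\oplus\underline{\R}$ is trivial and $E_k\cong M\times\S^2\cong\S TM$ as oriented $\S^2$-bundles. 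I expect the $k=1$ case — where exactly the obstruction $e(N)\bmod 2$ surfaces — to be the crux; the $k=2$ case is then a cheap variant that buys triviality at the cost of replacing the embedded latitude by a doubly-covered one.

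To finish, I would compose $\Phi$ with a bundle isomorphism $E_k\cong\S TM$ (with $k=2$ in general and $k=1$ when $e(N)$ is even, choosing orientations to match). The result $f\colon N\to\S TM$ is a bundle map over $M$, and fibrewise it is a diffeomorphism post-composed with an immersion — an embedding when $e(N)$ is even — which is exactly the conclusion. The remaining verifications are routine: that an oriented $\S^1$-bundle may be taken principal with Euler class $e(N)$, the elementary computation of $\Phi|_{N_p}$, and the standard bundle-theoretic facts invoked above.
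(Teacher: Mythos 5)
Your argument is correct, and it is genuinely different from the paper's. The paper proves the immersion case by the fibrewise Hirsch--Smale $h$-principle: since $V(N)$ is trivial and $\S TM\cong M\times\S^2$ (parallelisability of $M$), one only needs a vertical bundle monomorphism, which is written down explicitly. The embedding case is then handled separately by observing that when $e(N)$ is even, $N$ is the oriented line bundle of a plane field on $M$ and therefore a sub-bundle of $\S TM$. Your proof instead realises $N$ as $\S(L)$ for a complex line bundle $L$, pushes the constant $x_0$ through the associated bundle $E_k = N\times_{\rho_k}\S^2$, and trivialises $E_k$ as the sphere bundle of $L^{\otimes k}\oplus\underline{\R}$ by a $w_2$/Spin computation over a $3$-complex. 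This is more constructive (no $h$-principle input), treats the two cases uniformly through the parameter $k$, and makes transparent exactly where the parity of $e(N)$ enters: it is the obstruction $w_2(L\oplus\underline{\R})=e(N)\bmod 2$ to trivialising $E_1$, which one can always kill at the cost of passing to $E_2$ and replacing the embedded latitude by its double cover. What you give up relative to the paper is precisely the $h$-principle's extra flexibility (homotopy control, relative statements), but for this lemma that flexibility is not needed, so your proof is a perfectly good, and arguably more illuminating, alternative.
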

\begin{proof}
    By the fibrewise Hirsch--Smale $h$-principle, it suffices to produce a bundle monomorphism $V(N)\rightarrow V(\S TM)$ between the associated vertical bundles of $N$ and $ \S TM$ respectively (see \cite{cieliebak2024introduction}). The existence of such a monomorphism follows from the triviality of $V(N)$ and $\S TM\cong M\times \S^2\subseteq M\times \R^3$, since $N$ is oriented and $M$ is parallelisable \cite{Benedetti-LiscaFraming3fold,milnor1974characteristic}. Indeed, if we denote by $\pi:N\rightarrow M$ the bundle projection and we fix a global trivialisation $V(N)\cong N\times \R$, then a bundle monomorphism is given by 
    \begin{align*}
        F: N\times \R &\rightarrow V(M\times \S^2),\\
        (p,t) &\mapsto (\pi(p), e_1, t\cdot e_2).
    \end{align*}

    For the embedding case observe that, since the Euler class $e(N)$ is even, $N$ is isomorphic to the bundle of oriented lines of a plane distribution on $M$, so the result follows (see for instance \cite[Proposition 4.3.2]{geiges2008introduction}).
\end{proof}

Our goal is to deform the bundle immersions $f$ of \cref{lem:S1-bundle_immersed_in_STM} to make each $f_p$ convex and then integrate. This will yield the claimed convex bundle immersions $g$ of \cref{prop:EngelDerivedProlongation}. Concretely, this amounts to the construction of curves in $\R^3$ whose (normalised) derivatives trace convex curves in $\S^2$. For a single curve this can be done manually: you pick a convex curve and you integrate, ensuring that the total integral is zero, so the primitive curve is also closed. However, when dealing with families, it is more handy to refer to Saldanha \cite{saldanhaHomotopyConvex}.

\subsection{Saldanha's result}

In what follows, we denote $I=[0,1]$ and think of the unit circle as $\S^1=[0,1]/_{0\sim1}$, so that the open interval $(0,1)\subseteq \S^1$ is naturally embedded therein. Write $\Imm$ for the space of immersions $\S^1 \looparrowright\S^2$. Following \cite{saldanhaHomotopyConvex}, call a convex curve \emph{complicated} if it is not a concatenation of embedded convex loops, and write $\Conv^c\subseteq \Imm$ for the subspace of such curves. The following $h$-principle type result holds:
\begin{proposition} {\cite[Proposition 1.4]{saldanhaHomotopyConvex}}\label{prop:conv_h_principle}
    Let $K$ be a compact CW-complex and $A\subseteq K$ a closed CW-subcomplex. For every continuous map $f: (K,\Op(A)) \to (\Imm,\Conv^c)$ there exists a homotopy $H_s: (K,\Op(A)) \to (\Imm,\Conv^c)$, $s\in[0,1]$, such that:
    \begin{enumerate}[label=(\roman*)]
        \item $H_0 =f,$
        \item $H_s(k) = f(k)$ for all $(s,k) \in [0,1]\times \Op(A)$,
        \item $\operatorname{Image}(H_1) \subseteq \Conv^c$.
    \end{enumerate}
\end{proposition}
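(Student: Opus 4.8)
This is \cite[Proposition 1.4]{saldanhaHomotopyConvex}, so in the present paper it is invoked as a black box; the paragraphs below only sketch how one would establish it, following Saldanha. The strategy is a parametric, relative version of a \emph{``subdivide, convexify, graft''} argument, and its whole point is to bypass the failure of the naive $h$-principle: local convexity of \emph{closed} curves is genuinely non-ample, so the full space $\Conv$ of convex curves has a richer homotopy type than $\Imm$, and only the ``complicated'' subspace $\Conv^c$ admits a relative $h$-principle of this kind.

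First I would subdivide. Cover $\S^1$ by finitely many closed arcs $I_1,\dots,I_m$ whose interiors cover it and which overlap pairwise only in short subarcs, chosen fine enough that every immersed arc $f(k)|_{I_j}$, $k\in K$, has very small length, hence is $C^2$-close to a geodesic. On a sufficiently short arc the relation $\det(\gamma,\gamma',\gamma'')>0$ is flexible: prescribing the $1$-jets of $f(k)$ at the endpoints of $I_j$, one can $C^2$-perturb $f(k)|_{I_j}$ to a locally convex arc sharing those $1$-jets, and the space of such perturbations is nonempty and connected, so the perturbation can be taken continuous in $k$ and stationary over the locus where $f(k)$ is already locally convex. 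Performing this arc by arc, with the $1$-jets at subdivision points held fixed, homotopes $f$ (rel $\Op(A)$) to a family that is locally convex on each $I_j$ and $C^1$ across the subdivision points.

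Such a concatenation need not be \emph{globally} locally convex — the geodesic curvature may jump at a subdivision point — and it need not land in $\Conv^c$. Both defects are cured by Saldanha's \emph{grafting}: near each subdivision point, where the curve is already locally convex on both sides, replace a short arc through the point by a standard locally convex ``spiral'' with the same endpoint Frenet frames but winding additional times. These transition spirals are drawn from a fixed finite list determined by the combinatorics of the subdivision, so depend continuously on $k$, and the replacement keeps the curve an immersion throughout, being locally convex on the modified region. Choosing at least one spiral sufficiently wound (non-simple) forces the resulting smooth locally convex curve $H_1(k)$ not to be a concatenation of embedded convex loops, i.e.\ $H_1(k)\in\Conv^c$. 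Over $\Op(A)$, where $f(k)$ already lies in $\Conv^c$, all of the above is done trivially, so the composite homotopy $H_s$ is stationary there; concatenating the homotopies of the three stages yields $H_s\colon(K,\Op(A))\to(\Imm,\Conv^c)$ with $H_0=f$, with $H_s$ constant on $\Op(A)$, and with $\operatorname{Image}(H_1)\subseteq\Conv^c$.

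The crux is the grafting step. It must be set up so that (a) the homotopy never leaves $\Imm$, including over the part of $K$ where $f$ is merely an immersion (here the subdivision is essential, since there is no component obstruction to convexifying \emph{short} arcs); (b) it is genuinely parametric and natural enough to be carried out simultaneously over all of $K$ while staying stationary over $\Op(A)$; and (c) one can certify that the output is complicated rather than accidentally a concatenation of embedded convex loops. Quantifying how a graft changes the isotopy and combinatorial type of a locally convex curve, and keeping this control uniform in the parameter, is the technical heart of Saldanha's proof.
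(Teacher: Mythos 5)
The paper does not prove this proposition; it cites it verbatim as \cite[Proposition~1.4]{saldanhaHomotopyConvex} and invokes it as a black box, and you correctly recognize this. So there is no in-paper proof to compare against, and your treatment of the statement's role is exactly right.

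Your optional sketch of Saldanha's external argument is plausible in outline but deviates from his actual machinery. Saldanha organizes the proof around a ``next step'' function, a dichotomy of good versus bad steps, and explicit graftable arcs tangent to a pair of latitude circles---this is the scaffolding the present paper imports explicitly in the subsequent lemmas and in the proof of \cref{prop:complicated_can_completely_surrounded}---rather than around a generic ``subdivide into short arcs and convexify piecewise'' scheme. Within your sketch, two intermediate claims would also need care if fleshed out: the assertion that short arcs can be $C^2$-perturbed to locally convex arcs rel their endpoint $1$-jets requires a (true but here unargued) ampleness/convexity argument for the pointwise relation $\det(\gamma,\gamma',\gamma'')>0$; and the resulting concatenation is only $C^1$ at subdivision points, so matching Frenet frames in the grafting step---a $1$-jet condition---does not by itself restore $C^2$ smoothness and hence local convexity across the seam. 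None of this bears on the correctness of the present paper, which simply takes the proposition on trust from Saldanha.
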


\subsection{The zero integral condition}
For our purposes we require a refinement of \cref{prop:conv_h_principle} to produce families of convex curves with vanishing integral.

A necessary condition for a smooth curve in $\R^3$ to have integral zero is that the convex hull of its image contains the origin $0\in\R^3$. A stronger hypothesis, coming from control theory and introduced in the context of convex integration by Gromov, reads: A smooth curve $\gamma:\S^1\to \S^2$ \emph{strictly surrounds} the origin if
\begin{equation*}
    \Op (0) \subseteq \operatorname{Conv}(\img \gamma).
\end{equation*}
Equivalently, there is no non-zero vector $u\in \R^3$ with $u \cdot \gamma(t) \geq 0$ for all $t\in \S^1$. Geometrically, this means that $\img\gamma$ is not contained in any closed hemisphere.

The following allows us to achieve the zero integral condition:
\begin{lemma}[Gromov's reparametrisation lemma {\cite[Subsection 2.4.1]{gromovPartialDifferentialRelations1986}}] \label{lem:Gromov_reparam}
    Let $K$ be a compact CW-complex and $A\subseteq K$ a closed CW-subcomplex. Consider a $K$-parameter family of immersions $f:K\to \Imm$ such that
    \begin{enumerate}[label=(\roman*)]
        \item $f(k)$ strictly surrounds the origin for all $k\in K$;
        \item $\displaystyle \int_0^1 f(k)(t) \d t= 0\in \R^3$ for $k\in \Op (A)$.
    \end{enumerate}
    Then, there exists a $K$-parameter family of diffeomorphisms  $\psi:(K,\Op(A)) \to (\Diff(\S^1),\Id)$, such that the reparametrisations $\tilde{f}(k) \defi f(k)\circ \psi(k)$ satisfy
    \begin{equation*}
        \int_0^1 \tilde{f}(k)(t) \d t = 0, \quad \text{for all } k\in K.
    \end{equation*}
\end{lemma}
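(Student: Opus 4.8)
The plan is to reduce the vector-valued zero-integral condition to a scalar averaging problem that can be solved parametrically. First I would observe that prescribing a diffeomorphism $\psi(k)\in\Diff(\S^1)$ is, up to a base point, the same as prescribing a positive probability density $\rho(k)\in C^\infty(\S^1)$ on the circle: writing $\psi(k)^{-1}(s)=\int_0^s \rho(k)(\tau)\d\tau$, one has the change-of-variables identity $\int_0^1 f(k)(\psi(k)(t))\d t = \int_0^1 f(k)(s)\,\rho(k)(s)\d s$. Thus the conclusion $\int_0^1 \tilde f(k)(t)\d t = 0$ becomes: find, continuously in $k$, a positive density $\rho(k)$ with total mass $1$ such that $\int_{\S^1} f(k)(s)\,\rho(k)(s)\d s = 0\in\R^3$, and moreover $\rho(k)\equiv 1$ for $k\in\Op(A)$. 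The natural target space is then the convex set $\Pi(k)$ of positive unit-mass densities that annihilate $f(k)$ in this weighted sense.

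The key point is that hypothesis (i) makes $\Pi(k)$ non-empty and, crucially, \emph{convex} (it is the intersection of the affine space of densities with integral $0$ against $f(k)$ with the convex cone of positive densities), and this convex set varies continuously with $k$ in a suitable sense. Non-emptiness is exactly where strict surrounding is used: since $\img f(k)$ is not contained in any closed hemisphere, the origin lies in the \emph{interior} of the convex hull of the image, so $0$ is a positive combination of points $f(k)(s_i)$; smoothing such a finitely-supported combination to a smooth positive density produces an element of $\Pi(k)$. Strictness of the surrounding is what guarantees this can be done with a strictly positive density and with room to spare, i.e.\ that $\Pi(k)$ has non-empty interior, which is what lets the selection be done continuously. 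On $\Op(A)$ the constant density $1$ already lies in $\Pi(k)$ by hypothesis (ii), so it serves as the prescribed boundary value.

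Given this, the construction of $\rho$ is a standard parametric selection over the CW-pair $(K,A)$: one builds $\rho$ cell by cell over $K$, extending from $\Op(A)$ outward, using at each stage that the fibre $\Pi(k)$ is a non-empty convex (hence contractible) subset of the topological vector space $C^\infty(\S^1)$, so that partial sections over the boundary of a cell extend over the cell by coning off; a partition of unity on $K$ patches the local choices together while staying inside each convex $\Pi(k)$. Finally I would recover $\psi(k)$ from $\rho(k)$ by the antiderivative formula above, check that $\psi(k)\in\Diff(\S^1)$ (positivity of $\rho$ gives a diffeomorphism, unit mass gives compatibility with the identification $\S^1=I/{\sim}$), that $\psi(k)=\Id$ on $\Op(A)$, and that $k\mapsto\psi(k)$ is continuous into $\Diff(\S^1)$, all of which are routine. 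The main obstacle is the set-up rather than the topology: one must formulate the continuity of $k\mapsto\Pi(k)$ precisely enough (e.g.\ lower semicontinuity of the fibres, or uniform interior points on compact subsets of $K$) that the parametric coning-off argument genuinely applies — with strict surrounding this is exactly the margin that makes it work.
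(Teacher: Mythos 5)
The paper does not prove this lemma: it is quoted from Gromov \cite[Subsection 2.4.1]{gromovPartialDifferentialRelations1986} and used as a black box, so there is no in-house argument to compare against. Your reconstruction nevertheless follows the standard route: trade $\psi(k)$ for a positive unit-mass density $\rho(k)=(\psi(k)^{-1})'$ and observe that the admissible set $\Pi(k)=\{\rho>0:\int_{\S^1}\rho=1,\ \int_{\S^1} f(k)\rho=0\}$ is convex, non-empty by strict surrounding, and contains the constant density $1$ for $k\in\Op(A)$.

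The genuine gap is in the selection step. ``Coning off'' a partial section over the boundary of a cell towards a base point does not stay in the fibres, because the fibres $\Pi(k)$ vary with $k$: the interpolated density averages an element of $\Pi(k_0)$ with an element of $\Pi(x)$ and need not satisfy $\int f(k)\rho=0$ for any intermediate $k$, since that affine constraint itself moves. Convexity of each individual $\Pi(k)$ cannot cure this. What does work is the partition-of-unity variant you also mention, but it requires genuine local continuous sections $k\mapsto\rho(k)\in\Pi(k)$, which should be produced by an implicit function theorem: starting from $\rho_{k_0}\in\Pi(k_0)$, perturb along a finite-dimensional family $\rho_{k_0}+\sum_{i=1}^{3}\lambda_i\phi_i$ with mean-zero $\phi_i$ chosen so that the differential of $\lambda\mapsto\int_{\S^1} f(k_0)\bigl(\rho_{k_0}+\sum_i\lambda_i\phi_i\bigr)$ at $\lambda=0$ is onto $\R^3$ --- surjectivity uses strict surrounding to rule out $u\cdot f(k_0)\equiv\mathrm{const}$ for some $u\neq0$ --- and then solve $\int_{\S^1} f(k)\rho(k)=0$ continuously in $k$ near $k_0$, with positivity of $\rho(k)$ persisting by openness. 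Patching these local sections by a partition of unity stays in each $\Pi(k)$ by convexity; this is in effect Michael's continuous selection argument for convex fibres, not a cell-by-cell cone. The same implicit-function mechanism is what upgrades your ``smooth the atomic measure'' step from an approximate to an exact solution of $\int f(k)\rho=0$, which you currently only gesture at.
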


We will show that the strictly surrounding condition can be achieved for any family of complicated curves by a suitable homotopy. For this we introduce a convenient subclass.

\begin{definition}
    Let $\gamma:[0,1]\to \S^2$ be a convex curve. We say that:
    \begin{itemize}
        \item $\gamma$ has a \emph{wiggle} in a sub-interval $[a,b]$, if $\gamma(a)=\gamma(b)$ and $\gamma|_{[a,b]}$ is a smooth, closed, embedded convex curve after identifying the endpoints.
        \item $\gamma$ has an \emph{$n$-wiggle} in $[a,b]$, if there exists a partition $a=t_0<t_1<\cdots<t_n=b$ such that $\gamma$ has a wiggle in $[t_i,t_{i+1}]$ for all $i=0,\ldots,n-1$, and all arcs $\gamma|_{[t_i,t_{i+1}]}$ have the same image.
        \item $\gamma$ \emph{$n$-completely surrounds} the origin if there exist disjoint intervals $[a_1,b_1]$ and $[a_2,b_2]$ such that $\gamma$ has a $n$-wiggle in each $[a_j,b_j]$, $j=1,2$, and
        \begin{equation*}
            \Op (0)\subseteq \operatorname{Conv}(\img \gamma|_{[a_1,b_1]\cup[a_2,b_2]}).
        \end{equation*}
        In particular, $\gamma$ strictly surrounds $0$.
    \end{itemize}
    \vspace{-8mm}
\end{definition}
The subspace of such curves is denoted by $\Conv^c_n\subseteq \Conv^c$. In view of \cref{lem:Gromov_reparam}, it is natural to deform families of complicated convex curves so that they $n$–completely surround the origin, for a fixed $n\ge 1$. This can be achieved by \emph{grafting}; the precise relative statement is as follows.
\begin{proposition}\label{prop:complicated_can_completely_surrounded}
    Let $K$ be a compact CW-complex and $A\subseteq K$ a closed CW-subcomplex. For every continuous map $f:(K,\Op(A)) \to (\Conv^c,\Conv^c_{n+2})$ with $n\geq1 $, there exists a homotopy $H_s: (K,\Op (A)) \to (\Conv^c,\Conv^c_{{n+2}})$, $s\in[0,1]$, such that
    \begin{enumerate}[label=(\roman*)]
        \item  $H_0=f$,
        \item $H_s(k) = f(k)$ for all $(s,k)\in [0,1]\times \Op (A)$,
        \item $\operatorname{Image}(H_1) \subseteq \Conv^c_n$.
    \end{enumerate}
\end{proposition}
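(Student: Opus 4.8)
The plan is to realise the homotopy $H_s$ by \emph{grafting}, in Saldanha's sense: homotoping each complicated convex curve so as to splice into it a prescribed packet of small-circle loops, placed in fixed windows of $\S^1$ and oriented so that their images strictly surround the origin.

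\emph{Step 1: the local grafting move.} Fix, once and for all, two disjoint closed arcs $J_1,J_2\subseteq(0,1)\subseteq\S^1$, two orthogonal directions $u_1,u_2\in\S^2$, and a small $\epsilon>0$; let $C_j\defi\{x\in\S^2\st x\cdot u_j=\epsilon\}$, which is a convex circle lying close to the equator $\{x\cdot u_j=0\}$. A direct computation shows that $\Op(0)\subseteq\operatorname{Conv}(C_1\cup C_2)$ once $\epsilon$ is small. I would then establish the following: for every $\gamma\in\Conv^c$ there is a path $s\mapsto G_s(\gamma)\in\Conv^c$, $s\in[0,1]$, with $G_0(\gamma)=\gamma$, agreeing with $\gamma$ outside $\Op(J_1\cup J_2)$ for all $s$, depending continuously (in fact naturally) on $\gamma$, constant on those $\gamma$ that already contain the packet, and such that $G_1(\gamma)$ carries inside each $J_j$ an $n$-wiggle whose image is a convex circle $C^1$-close to $C_j$. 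Such a $G_1(\gamma)$ then has two disjoint $n$-wiggles whose images jointly surround the origin, so $G_1(\gamma)\in\Conv^c_n$. The hypothesis that $\gamma$ is complicated is essential here: an embedded convex loop is too rigid to graft a further loop into while staying convex, whereas complicated curves are flexible enough for the grafting construction. This step carries all the real work, and I would extract the construction — together with its continuity and naturality — from \cite{saldanhaHomotopyConvex}.

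\emph{Step 2: assembling over $K$ relative to $A$.} Since the homotopy must be stationary near $A$ while grafting is not, I would interpolate across a collar. Choose closed sets $A\subseteq\operatorname{int}(A_1)$, $A_1\subseteq\operatorname{int}(A_2)$, $A_2\subseteq\Op(A)$ with $f(A_2)\subseteq\Conv^c_{n+2}$, and a cutoff $\chi\colon K\to[0,1]$ vanishing on a neighbourhood of $A_1$ and equal to $1$ on $K\setminus\operatorname{int}(A_2)$. Using the freedom available on the collar $A_2\setminus\operatorname{int}(A_1)$ — which is disjoint from the future stationary set $\operatorname{int}(A_1)$, and on which $f$ is valued in $\Conv^c_{n+2}$ — carry out first a homotopy there, through $\Conv^c_{n+2}$ (possibly after a finite subdivision of $K$ to make continuous choices of wiggle data), that slides the two $(n+2)$-wiggles of each collar curve into a fixed pair of disjoint arcs contained in $\S^1\setminus\Op(J_1\cup J_2)$; the two spare wiggles give the room to perform this sliding. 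Writing $f'$ for the resulting family (equal to $f$ outside the collar), set
\begin{equation*}
    H_s(k)\defi G_{s\,\chi(k)}\big(f'(k)\big).
\end{equation*}
Then $H_0=f$; $H$ is stationary on $\operatorname{int}(A_1)\supseteq A$, which we relabel as the new $\Op(A)$; and for $\chi(k)=1$ one has $H_1(k)=G_1(f'(k))\in\Conv^c_n$. For $k$ in the collar, $H_s(k)$ agrees with $f'(k)$ outside $\Op(J_1\cup J_2)$, hence retains the two disjoint $(n+2)$-wiggles of $f'(k)$ surrounding the origin, so $H_s(k)\in\Conv^c_{n+2}\subseteq\Conv^c_n$ for all $s$. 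Finally $H_s(K)\subseteq\Conv^c$ by Step 1, and on $\Op(A)$ the values stay in $\Conv^c_{n+2}$ because $H$ is stationary there. This is the homotopy required.

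\emph{Main obstacle.} The crux is Step 1: producing a grafting operation that is simultaneously through convex curves, supported in a fixed small window, natural in the ambient curve, and relative to curves already containing the packet, so that it patches with the stationary homotopy across the collar. Once this is in hand, the rest is a routine cutoff-and-collar argument, with the "$n+2$'' of the hypothesis serving only to provide slack in the collar bookkeeping of Step 2.
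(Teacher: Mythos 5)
Your Step~1 proposes a universal grafting operator $G_s\colon\Conv^c\to\Conv^c$ supported in fixed windows $J_1\cup J_2\subseteq\S^1$, and this is where the argument breaks down: no such operator exists. Grafting à la Saldanha requires the subarc being modified to be (projectively equivalent to) a \emph{graftable model} — a convex arc tangent, with the correct co-orientation, to two latitude circles on opposite sides of the equator — and the parameter window where such an arc can be found depends on the curve $\gamma$. For a fixed window $J_1$, the arc $\gamma|_{J_1}$ of an arbitrary complicated convex curve will generically fail this tangency condition, and there is no $C^1$-small modification supported in $\Op(J_1)$ that splices a loop $C^1$-close to a preassigned circle $C_1$ while staying convex, because the tangent direction of $\gamma$ at the endpoints of $J_1$ need not be compatible with $C_1$. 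This is precisely the obstruction the paper's auxiliary lemma (\cref{lem:Complicated_loose}) is designed to handle: it produces a finite cover $\{K_j\}$ of $K$ and, on each chart, \emph{continuously varying} intervals $I_j(k)\subseteq\tilde I_j(k)$ (extracted from Saldanha's ``next step'' decomposition into good and bad steps) where grafting \emph{is} possible. There is no way to promote this chart-dependent, curve-dependent choice of support to a single fixed window.

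Two further points worth flagging. First, the ``sliding'' homotopy you invoke on the collar — moving the two $(n+2)$-wiggles of each collar curve into prescribed arcs disjoint from $J_1\cup J_2$, through $\Conv^c_{n+2}$ — is asserted without justification, and is itself a nontrivial claim about the connectivity of $\Conv^c_{n+2}$ in a strong, relative, parametric sense. Second, your diagnosis that the hypothesis $n+2$ ``serves only to provide slack in collar bookkeeping'' is a symptom of the missing idea: in the actual proof the slack is essential for a different reason. Because the grafting support $\tilde I_j(k)$ is curve-dependent, it may overlap the pre-existing $(n+2)$-wiggle intervals near their endpoints; \cref{lem:Complicated_loose}(iv) controls this overlap and shows each grafting can destroy at most one wiggle from each end, so that $(n+2)$-wiggles degrade to at least $n$-wiggles. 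If your universal $G_s$ existed, no slack would be needed at all — which is a useful sanity check that the fixed-window approach cannot be the right one.
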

The remainder of the section is devoted to its proof, which in turn implies \cref{prop:EngelDerivedProlongation}.

\subsection{Proof of \cref{prop:complicated_can_completely_surrounded} via grafting}

A \emph{graftable model} is a convex curve $\gamma_1:[t_0,t_1]\to \S^2$ that is tangent to some latitude circles $\{z= C_i\}$ at $t_i$, with $C_0>0>C_1$, and whose orientation matches the convex orientation of those circles. Consider then a convex curve $\gamma:\S^1\rightarrow \S^2$ that contains a graftable model $\gamma_1=\gamma_{|[t_0,t_1]}$. The \emph{grafting} of $\gamma$ along $[t_0,t_1]$ is the homotopy of convex curves obtained by cutting $\gamma$ at $t_0$ and $t_1$,  rotating the arc $\gamma_1=\gamma_{|[t_0,t_1]}$ about the $z$-axis by angle $2\pi s$ for $s\in[0,1]$, and reconnecting it along the latitude circles $z=C_i$ to the complementary arc $\gamma|_{\S^1\setminus [t_0,t_1]}$. For $s=1$, the resulting curve contains two disjoint $1$-wiggles whose convex hull contains the origin. In other words, the resulting convex curve\footnote{Technically, one needs to smooth out the homotopy constructed in this manner. This can be done in a $C^1$-small manner, supported in a small neighbourhood of the cutting points, while preserving convexity. This will be implicit in the remainder of the discussion. } $1$-completely surrounds the origin. See \cref{fig:grafting} for a schematic depiction of the grafting homotopy. We also define the \emph{$n$-grafting} by repeating the grafting homotopy $n$ times, equivalently, rotate by angle $2n\pi s$; in this case, the resulting curve $n$-completely surrounds the origin. 

\begin{figure}[ht]
    \centering
    \includegraphics[width=0.5\textwidth]{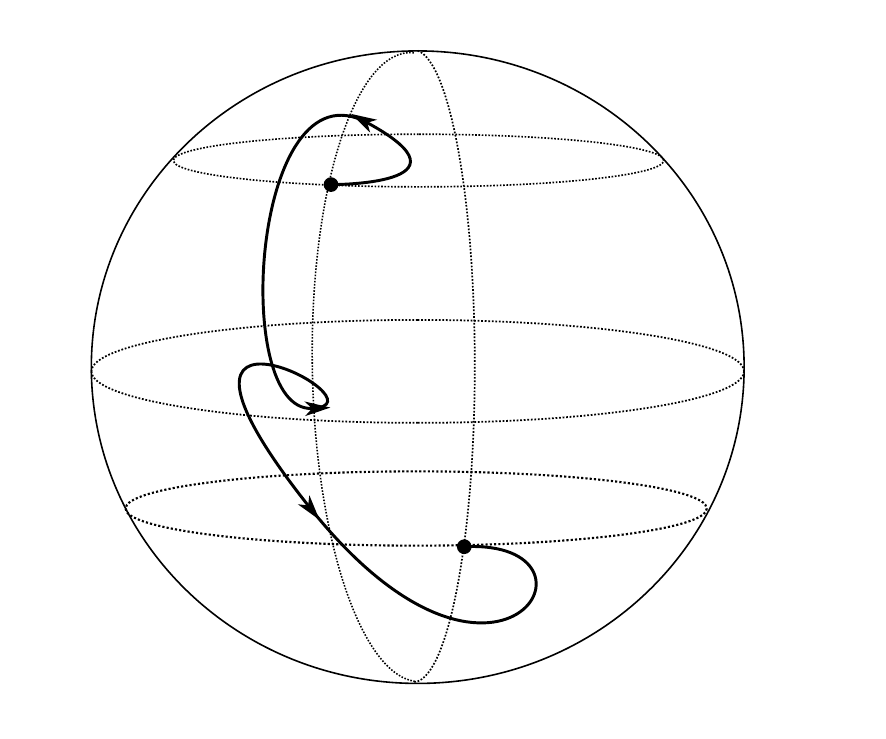}
    \caption{A graftable arc $\gamma_1:[t_0,t_1]\to\S^2$. At its endpoints, the arc is tangent to two latitude circles.}
    \label{fig:graftable}
\end{figure}

\begin{figure}[ht]
    \centering
    \includegraphics[width=1.1\textwidth]{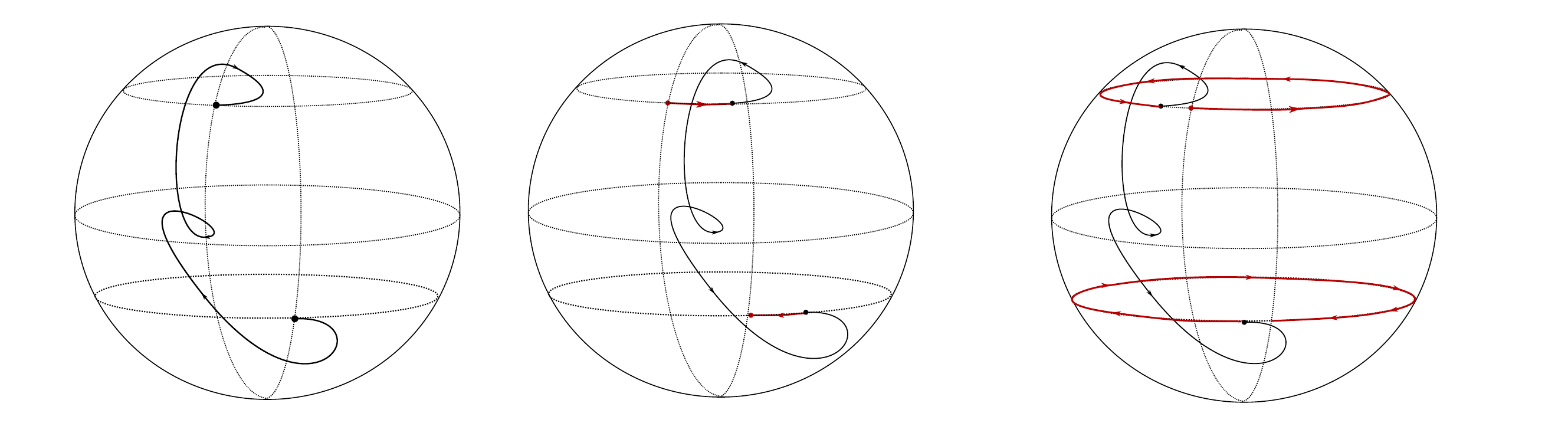}
    \caption{Grafting homotopy of a graftable arc. The images depict the homotopy at times $s=0, 0.2, 0.8$, from left to right. In particular, in the last frame we see that the grafting has almost completed a full loop, which will create a wiggle in the northern hemisphere and another wiggle in the southern hemisphere (both in red).}
    \label{fig:grafting}
\end{figure}

The following lemma is implicit in Saldanha’s work:
\begin{lemma}\label{lem:Complicated_loose}
    Let $f:K\to \Conv^c$ be a family of complicated convex curves. Then, there are:
    \begin{itemize}
        \item Compact subsets $K_1,\ldots,K_m\subseteq K$  covering $K$.
        \item Compact subsets $K_{j,1},K_{j,2}\subseteq K_j$ covering $K_j$, for each $j$.
        \item Continuous functions $a_j,b_j,c_j,d_j:K_j\to(0,1)\subseteq \S^1$, for each $j$, with $a_j<b_j<c_j<d_j$.
        \item Intervals $I_j(k) \defi [b_j(k),c_j(k)] \subseteq \tilde{I}_j(k)\defi [a_j(k),d_j(k)]$.
    \end{itemize}
Such that:
    \begin{enumerate}[label=(\roman*)]
        \item For every $k\in K_i \cap K_j$ and $i\neq j$, it holds that $\tilde{I}_j(k)\cap \tilde{I}_i(k)=\emptyset$.
        \item Over $K_{j,1}$, for each $j$, the arcs $f(k)\!\mid_{I_j(k)}$ are \emph{projectively equivalent}\footnote{That is, there exists a continuous map $k\mapsto P_k\in\mathrm{PGL}^+(3,\R)$ such that $P_k\circ f(k)\!\mid_{I_j(k)}$ is a graftable model.} to a graftable model. 
        \item Over $K_{j,2}$, for each $j$, the arcs $f(k)\!\mid_{\tilde I_j(k)}$ are projectively equivalent to a graftable model.
        \item Suppose there is $k\in K_j$ such that $f(k)$ has an $n$–wiggle in some $[a,b]\subseteq (0,1)$ with $n\ge 2$. Then $\tilde I_j(k)$ is not fully contained in $[a,b]$. Moreover, $[a,b] \cap \tilde I_j(k)$ contains no wiggle.
    \end{enumerate}
\end{lemma}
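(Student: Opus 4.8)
The plan is to unpack Saldanha's proof of Proposition~\ref{prop:conv_h_principle} (the $h$-principle for complicated convex curves, \cite[Proposition 1.4]{saldanhaHomotopyConvex}) and extract from it the data we need. Recall that the notion of ``complicated'' is defined precisely so that a convex curve is not just a concatenation of embedded convex loops; Saldanha's key structural observation is that, for a complicated curve $\gamma$, after applying a projective transformation one finds a subarc that looks like a standard graftable model (an arc that is tangent from below/above to two fixed latitude circles $\{z=C_0\}$, $\{z=C_1\}$ with $C_0 > 0 > C_1$, and is compatibly oriented). This is exactly the local model needed for the grafting homotopy. So for each fixed $k\in K$, the curve $f(k)$ admits such a subarc over some open interval. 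The first step is to make this selection locally continuous in $k$: for each $k_0\in K$ there is an open neighbourhood $U_{k_0}$ on which one can choose, continuously in $k$, an interval $[b(k),c(k)]\subseteq(0,1)$ together with a continuous assignment $k\mapsto P_k\in\mathrm{PGL}^+(3,\R)$ such that $P_k\circ f(k)\mid_{[b(k),c(k)]}$ is a graftable model. Compactness of $K$ then yields a finite subcover, which after shrinking and relabelling gives the compact sets $K_1,\dots,K_m$ and the functions $b_j,c_j$.

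The second step is to produce the slightly larger intervals $\tilde I_j(k)=[a_j(k),d_j(k)]\supseteq I_j(k)$ and the sub-decomposition $K_j=K_{j,1}\cup K_{j,2}$. The point of the two scales is a standard trick in this kind of argument: one wants a ``core'' interval $I_j$ on which grafting is performed and a ``buffer'' $\tilde I_j$ that absorbs the boundary smoothing, and one needs the graftable-model property to hold on \emph{one} of the two scales at each parameter. Since being projectively equivalent to a graftable model is an open condition on the arc, if it holds on $I_j(k)$ it persists under a small enlargement; the set of $k$ for which it still holds after enlarging to $\tilde I_j(k)$ is closed, and we call it $K_{j,2}$, while its ``complement'' (taken as a slightly enlarged closed set so that $K_{j,1}\cup K_{j,2}=K_j$) is $K_{j,1}$, on which we only claim the property on the smaller interval $I_j$. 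Concretely I would first fix the $\tilde I_j$ as fixed small enlargements of the $I_j$, shrink the $I_j$ if necessary so that the graftable-model property is robust, and only then carve $K_j$ into $K_{j,1}$ and $K_{j,2}$.

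The third step is to arrange the disjointness condition (i): for $k\in K_i\cap K_j$ with $i\ne j$, we need $\tilde I_i(k)\cap\tilde I_j(k)=\emptyset$. This is where one spends the most care, and I expect it to be the main obstacle. The idea is to first prove a ``separated'' version where, over each $K_i\cap K_j$, the core intervals $I_i(k),I_j(k)$ can be taken disjoint — this is possible because a complicated curve has, in a suitable sense, a whole subarc worth of freedom to place a graftable model (one can slide the chosen subarc, or pick among several), so generic choices in different charts land in disjoint regions; then shrink the $I_j$ (uniformly over the relevant overlaps) enough that even after enlarging to $\tilde I_j$ the intervals stay pairwise disjoint. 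Making this uniform over all $\binom{m}{2}$ overlaps simultaneously, while keeping all the graftable-model properties from Steps 1--2 intact, is the delicate bookkeeping point; a finite-induction over the pairs $(i,j)$, shrinking a little at each stage, handles it, using that all the relevant sets are compact.

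The final step is condition (iv), which is essentially an automatic consequence of the construction together with the definition of a graftable model. A graftable model is tangent to the latitude circles $\{z=C_0\}$ and $\{z=C_1\}$ with $C_0>0>C_1$ at its endpoints; in particular its image genuinely crosses the equator and is \emph{not} contained in any latitude band of the form $\{z\ge 0\}$ or $\{z\le 0\}$. On the other hand, if $f(k)$ has an $n$-wiggle in $[a,b]$ with $n\ge 2$, all the wiggle arcs have the same image, which (being a single embedded convex loop traversed repeatedly) is the boundary of a convex disc on $\S^2$ and hence lies in an open hemisphere; a suitable projective transformation does not change the fact that after normalisation this image sits in a latitude band. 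So $[a,b]\cap\tilde I_j(k)$, if it contained a wiggle, would force the graftable model $P_k\circ f(k)\mid_{I_j(k)}$ (or $\mid_{\tilde I_j(k)}$) to have part of it confined to such a band while still being tangent to $\{z=C_0\}$ and $\{z=C_1\}$ — and if $\tilde I_j(k)\subseteq[a,b]$ entirely, the whole model would be so confined, contradicting $C_0>0>C_1$. Spelling this out using the projective-equivalence maps $P_k$ and the convexity of wiggle arcs closes the argument; I would present it as a short contradiction once the first three steps are in place.
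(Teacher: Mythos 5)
Your proposal follows the right source (Saldanha) in spirit, but the concrete mechanisms you propose in Steps~2 and~4 do not work, and Steps~1 and~3 are too vague without the key device the paper uses, namely Saldanha's \emph{next-step} function and the good/bad step dichotomy.

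\textbf{The role of the two scales $I_j \subseteq \tilde I_j$ (your Step~2).} You derive the decomposition $K_j = K_{j,1}\cup K_{j,2}$ from a claimed openness of the graftable-model property under enlargement of the interval: ``if it holds on $I_j(k)$ it persists under a small enlargement.'' This is not true. A graftable model is an arc tangent to two latitude circles \emph{at its endpoints}; enlarging the parameter interval moves the endpoints, and after the move there is no reason the arc should again be projectively equivalent to an endpoint-tangent model. The actual mechanism in the paper is different and comes from Saldanha: given a good step $[t_0,t_1]$ (an interval obtained from the next-step map that is not a wiggle), for any $\varepsilon>0$ one can choose two nested thickenings $[t_0,t_1]\subsetneq[\widehat t_0,\widehat t_1]\subsetneq[\widetilde t_0,\widetilde t_1]\subsetneq[t_0-\varepsilon,t_1+\varepsilon]$ such that \emph{at least one} of the two restricted arcs is projectively equivalent to a graftable model, and this holds in families (this is \cite[Lemma 8.2, Lemma 9.1]{saldanhaHomotopyConvex}). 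The two subsets $K_{j,1}$ and $K_{j,2}$ record which of the two thickenings works; the dichotomy is not a consequence of openness.

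\textbf{Condition (iv) (your Step~4).} Your hemisphere/latitude-band argument has a gap. You argue that the wiggle's image, being a convex loop, lies in an open hemisphere, and then assert that ``after normalisation this image sits in a latitude band.'' But the projective transformation $P_k$ is \emph{determined} by the requirement that $P_k\circ f(k)\mid_{I_j(k)}$ (or $\mid_{\tilde I_j(k)}$) be a graftable model; it is not the transformation that sends the wiggle's hemisphere to $\{z\ge 0\}$ or $\{z\le 0\}$. The wiggle's image after $P_k$ lies in \emph{some} open hemisphere, but generically not one bounded by the equator. Moreover, even in the worst case $\tilde I_j(k)\subseteq[a,b]$, being tangent to $\{z=C_0\}$ and $\{z=C_1\}$ with $C_0>0>C_1$ does not by itself preclude the model from lying in some open hemisphere (there are hemispheres containing points at both heights), so there is no immediate contradiction. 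The paper instead proves (iv) dynamically: if $\gamma$ has an $n$-wiggle in $[a,b]$ with partition $a=s_0<\cdots<s_n=b$ and $n\ge 2$, then for any $t\in[s_i,s_{i+1}]$ with $0\le i\le n-2$, the next step $\mathrm{ns}_\gamma(t)$ lands in $[s_{i+1},s_{i+2}]$ and $[t,\mathrm{ns}_\gamma(t)]$ is a wiggle, hence a bad step. So any good step can meet $[a,b]$ only inside $[s_0,s_1)$ or $(s_{n-1},s_n]$, and since $\tilde I_j(k)$ is a small thickening of a good step, (iv) follows. This is the argument your proposal is missing.

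\textbf{Steps~1 and~3.} Your Step~1 gestures at the right idea (Saldanha guarantees a graftable subarc for any complicated curve, continuously in families), and your Step~3 proposes generic perturbation/sliding to achieve disjointness of the $\tilde I_j$, admitting that it is delicate. The paper's construction packages both into the next-step machinery: the $K_j$ are defined as the loci where the $j$th next-step interval $S_j(k)=[\mathrm{ns}^j_{f(k)}(0),\mathrm{ns}^{j+1}_{f(k)}(0)]$ is a good step (which covers $K$ since every complicated curve has at least one good step), and disjointness is obtained by shifting these near-adjacent intervals using the openness of the good-step condition. Without introducing this structure, the claims in your Steps~1 and~3 are plausible but unproven; with it, they become concrete and also deliver Step~4 for free, which your geometric argument does not.
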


\begin{proof}
    Given a convex curve $\gamma\colon [0,1]\to \S^2$ and $t_0\in[0,1)$, the \emph{next step} $\mathrm{ns}_\gamma(t_0)$ is the smallest $t_0^+>t_0$ such that $\gamma|_{[t_0,t_0^+]}$ does not extend to an embedded convex loop. There are multiple reasons why the curve may not extend. Saldanha classifies the possibilities; see the first two paragraphs of \cite[Sec.~9]{saldanhaHomotopyConvex}. A step is \emph{bad} if $\gamma|_{[t_0,t_0^+]}$ is a wiggle. Otherwise it is \emph{good}. 

    If $t_1=\mathrm{ns}_\gamma(t_0)$ and $[t_0,t_1]$ is a good step, then for any $\varepsilon>0$ there exist subintervals
    \begin{equation*}
        [t_0,t_1]\subsetneq [\widehat{t_0},\widehat{t_1}]\subsetneq [\widetilde{t_0},\widetilde{t_1}] \subsetneq [t_0-\varepsilon ,t_1+\varepsilon],
    \end{equation*}
    such that at least one of $\gamma|_{[\widehat{t_0},\widehat{t_1}]}$ or $\gamma|_{[\widetilde{t_0},\widetilde{t_1}]}$ is projectively equivalent to a graftable model. Moreover, this holds parametrically in families of good steps, see \cite[Lemma 8.2, Lemma 9.1]{saldanhaHomotopyConvex}.

    Consider now the sequence of next steps $\{\mathrm{ns}_{f(k)}^j(0)\}$, for each $j$ and $k$. Given the family $f:K\to \Conv^c$, we can define the subspace $K_j\subseteq K$ consisting of those $k\in K$ such that the $j$th-step $S_j(k) :=[\mathrm{ns}_{f(k)}^j(0),\mathrm{ns}_{f(k)}^{j+1}(0)] \subseteq [0,1)$ is good. According to the definition of bad step, every complicated curve has at least one good step, so $\{K_j\}$ is a cover of $K$. The intervals produced in this manner, for different $K_j$ and $K_i$, are not disjoint. However, being a good step is an open condition. This allows us to shift all the $S_j(k)$ to the right to make them disjoint; the definition of good step then defines the claimed $I_j(k) \subseteq \tilde{I}_j(k)$. For details, see the first three paragraphs of the proof of \cite[lemma 10.1]{saldanhaHomotopyConvex}.

    For $(iv)$, consider $\gamma:[0,1]\to \S^2$ with an $n$-wiggle in $[a,b]$ with partition $a=s_0<\cdots <s_n=b$. Then, for any $s_i\leq t\leq s_{i+1}$ with $i=0,\ldots,n-2$, it holds that $\mathrm{ns}_\gamma(t)=t^+$ satisfies $s_{i+1}\leq t^+\leq s_{i+2}$. Moreover, $[t,t^+]$ is wiggle and thus a bad step. Hence, any good step must meet $[a,b]$ near an endpoint, i.e., inside of $[s_0,s_1)$ or $(s_{n-1},s_n]$. Since $\tilde I_j(k)$ is a small thickening of a good step, $(iv)$ follows.
\end{proof}

\begin{proof}[Proof of \cref{prop:complicated_can_completely_surrounded}]
    Our proof follows Saldanha’s approach \cite[Lemma~10.1]{saldanhaHomotopyConvex}. We first treat the absolute case \(A=\varnothing\), since $f$ is a family of complicated convex curves, we use the notation from \cref{lem:Complicated_loose}.

    Choose smaller compact subsets $W_{j,1}\Subset K_{j,1}$ and $W_{j,2}\Subset K_{j,2}$ for $j=1,\ldots,m$, such that
    \begin{equation*}
        K = \bigcup_{j=1}^m \operatorname{Int}(W_{j,1})\cup\operatorname{Int}(W_{j,2}).
    \end{equation*}
    We construct the homotopy inductively for $j=1,\ldots,m$, modifying the family only over $K_j$. 

    Set $f_0:=f$. On $K_{1,1}$ perform an $n$–grafting of $f_0(k)$ along $I_1(k)$ for $k\in W_{1,1}$, obtaining $\tilde f_0$, and keep $\tilde f_0(k)=f_0(k)$ for $k\notin K_{1,1}$. On $K_{1,1}\setminus W_{1,1}$ interpolate with a bump function $\beta\colon K\to[0,1]$ satisfying $\beta\equiv 1$ on $W_{1,1}$ and $\beta\equiv 0$ on $K\setminus K_{1,1}$.

    Next, on $K_{1,2}$ perform an $n$-grafting of $\tilde f_0(k)$ along $\tilde{I}_1(k)$ for $k\in W_{1,2}$, producing $f_1$, and set $f_1(k)=\tilde f_0(k)$ for $k\notin K_{1,2}$; interpolate on $K_{1,2}\setminus W_{1,2}$ with another cutoff as before. The pair of $n$-wiggles created in the first step lies inside $\tilde I_1(k)$, and the grafting homotopy (rotation about the $z$-axis) preserves the $n$-completely surrounding condition. This yields a homotopy from $f_0$ to $f_1$ supported in $K_1$, with $f_1(k)$ $n$-completely surrounding the origin for $k\in W_{1,1}\cup W_{1,2}$.
    
    Iterating this two-step procedure for $j=2,\ldots,m$ yields maps $f_2,\ldots,f_m$. Because the intervals  $\tilde I_j(k)$ are pairwise disjoint, each stage preserves the previously created $n$-wiggles. Consequently, for every $k\in K$, the curve $f_m(k)$ $n$-completely surrounds the origin.
    
    For the relative case, fix a compact subset $W\subseteq \Op(A)$ such that $A\subseteq \operatorname{Int}(W)$. We perform the construction above with cutoffs so that the entire homotopy is supported in $K\setminus W$. Consider $k\in \Op(A)\setminus W$ and denote by $B_k$ and $B_k'$ the two intervals supporting the $(n\!+\!2)$\nobreakdash-wiggles. According to \cref{lem:Complicated_loose} (iv), each $\tilde{I}_j(k)$ can meet $B_k$ or $B_k'$ only along a subsegment that is itself not a wiggle. Since $k$ may be in multiple $K_j$, both ends of $B_k$ or $B_k'$ may intersect one such thickened good step.
    
    Consequently, since the grafting is supported inside $\bigcup_j \tilde I_j(k)$, it can remove at most one wiggle from each side of $B_k$ or $B_k'$. It follows that at least $n$ wiggles remain from each. We deduce that the $n$-completely surrounding condition holds on the collar $\Op(A)\setminus W$, and thus globally, while the homotopy remains equal to $f$ on $W$.
\end{proof}

\subsection{Proof of \cref{prop:EngelDerivedProlongation}}

Since $M$ is a closed, oriented $3$-manifold, we can fix a trivialisation $TM\simeq M\times \R^3$ for the rest of the proof. 

Assume first that $N\cong M\times \S^1$ is trivial. Fix an immersed curve $\gamma:\S^1 \looparrowright\S^2$ and apply \cref{prop:conv_h_principle} to homotope it to a complicated convex curve. Apply then \cref{prop:complicated_can_completely_surrounded} to homotope it further so it becomes completely surrounding. Lastly, apply Gromov's reparametrisation \cref{lem:Gromov_reparam} so its integral is zero. Denote the resulting curve by $\eta: \S^1 \looparrowright \S^2$. Then $\eta$ has a primitive $\nu: \S^1 \looparrowright \R^3$. By transversality, after a $C^3$–small perturbation, we may assume that $\nu$ is embedded while $\mathbf{t}(\nu)$ remains convex. Define a convex bundle embedding by
\begin{equation*}
    g:M\times \S^1 \hookrightarrow M\times \R^3\cong TM;\quad (p,t)\mapsto(p,\nu(t)).
\end{equation*}
This concludes the argument in the trivial bundle case.

For a general bundle $N$ we proceed as follows. Fix a finite cover by compact subsets $\{K_j\}_{j=1,\ldots,m}$ of $M$ over which $N$ trivialises, and a smaller cover by compact sets $\{W_j\}_{j=1,\ldots,m}$ with $W_j\Subset K_j$. Without loss of generality, assume that the transition maps $\sigma_{ij}:K_i\cap K_j\times \S^1 \to K_i\cap K_j\times \S^1$ are valued in $\operatorname{SO}(2)$, and that the refined cover $\{W_j\}$ has order $3$, so any point of $M$ lies in at most $4$ of the $W_j$ (see \cite[Theorem V.1]{hurewicz2015dimension}).

By \cref{lem:S1-bundle_immersed_in_STM}, there exists a bundle immersion $f: N \looparrowright \S TM\simeq M\times \S^2\subseteq M\times \R^3$. This immersion can be described as a collection of continuous maps 
\begin{equation*}
    f_j:K_j\longrightarrow \Imm,\qquad j=1,\ldots,m,
\end{equation*}
which are compatible with the transition maps in the obvious sense: on $K_i\cap K_j$ one has $f_i(k)\circ\sigma_{ij}(k)=f_j(k)$.

Our goal is to deform these maps so that their values lie in the subspace of convex curves with zero total integral. Notice that the latter condition is independent of the chosen trivialisations, because the transition functions are $\operatorname{SO}(2)$-valued. After this is achieved, the desired convex bundle immersion is obtained by fibrewise integration.

To achieve this, proceed inductively over the compact sets $W_j\Subset K_j$, $j=1,\ldots, m$. Set $n\defi 2\cdot4+1 =9$ which will be used in \cref{prop:complicated_can_completely_surrounded}.

For the first chart, apply \cref{prop:conv_h_principle}, \cref{prop:complicated_can_completely_surrounded} and \cref{lem:Gromov_reparam} (in the absolute case) to the map $f_1:K_1\to \Imm$. This yields a homotopy $f_1^s:K_1\to \Imm$, $s\in[0,1]$, with $f_1^0=f_1$ and $f_1^1:K_1\to \Conv^c_n$ having zero integral. We cut off this deformation outside of $W_1$: we fix a bump function $\rho:K_1\to[0,1]$, supported in $\Op(W_1)$, with $\rho_{|W_1}\equiv 1$, and define $\tilde{f}_1(k) \defi f^{\rho(k)}_1(k)$. Since the deformation is supported in $\Op(W_1)$ it defines a global bundle immersion $\tilde{f}:N \looparrowright \S TM$ with the required properties over $W_1$.

Since \cref{prop:conv_h_principle}, \cref{prop:complicated_can_completely_surrounded} and \cref{lem:Gromov_reparam} admit parametric and relative versions, the same argument can be iterated over $W_2,\ldots,W_m$, keeping the previously treated region fixed at each step. Since every point of $M$ lies in at most four of the sets $W_j$, the multiplicity of the wiggles may decrease by at most $2$ per step, hence by at most $8$ in total. It follows that, if we start from $\Conv^c_n$ with $n=9$, the final family will still lie in $\Conv^c_{n-8}=\Conv^c_1$, so the strictly surrounding condition persists throughout the induction. I.e., we have obtained a bundle immersion $\hat f: N \looparrowright \S TM$ with local representatives
\begin{equation*}
    \hat{f}_j: K_j\longrightarrow \Conv^c_{n-8}\subseteq \Imm,\qquad j=1,\ldots,m.
\end{equation*}
We may assume that these curves have zero total integral after an application of \cref{lem:Gromov_reparam}.

The final step of the proof amounts to integrating the family $\hat f$. Define local maps
\begin{equation*}
    g_j\colon K_j\longrightarrow \Imm(\S^1,\R^3), \qquad
    g_j(k)(t)\coloneqq \int_0^t \hat f_j(k)(\tau)\,d\tau,\qquad j=1,\ldots,m,
\end{equation*}
by fibrewise integration. Since $\int_0^1 \hat f_j(k)(\tau)\,d\tau=0$, each $g_j(k)$ is a closed curve in $\R^3$.

On overlaps $K_i\cap K_j$ the families $g_i$ and $g_j$ may differ by a translation and thus not define a bundle map. Namely, if we write
\begin{equation*}
    \sigma_{ij}(k)\colon \S^1\to \S^1,\ t\mapsto t-\delta(k),
\end{equation*}
for the transition function, then we get:
\begin{equation*}
    \begin{aligned}
    (g_i\circ \sigma_{ij})(k)(t)
    &= \int_0^{t-\delta(k)} \hat f_i(k)(\tau)\,d\tau
     = \int_0^{t-\delta(k)} \hat f_j(k)(\tau+\delta(k)\,d\tau \\
    &= \int_{\delta(k)}^{t} \hat f_j(k)(\tau)\,d\tau
     = g_j(k)(t) \;-\; \underbrace{\int_0^{\delta(k)} \hat f_j(k)(\tau)\,d\tau}_{=: \,v_{ij}(k)}.
    \end{aligned}
\end{equation*}
I.e., the translation vector $v_{ij}\colon K_i\cap K_j\to \R^3$ is independent of $t$, as expected.

Since $\R^3$ is contractible, we can now shift the $\{g_j\}$ to match them up. Concretely: choose a partition of unity $\{\phi_j\}$ subordinate to $\{K_j\}$ and, define a bundle map $g:N\to TM \simeq M\times \R^3$ by
\begin{equation*}
    g(q) = \sum_{j=1}^m \phi_j(\pi(q))g_j(\pi(q)),\quad q\in N,
\end{equation*}
where $\pi:N\to M$ is the bundle projection.
In the trivialisation over \(K_j\) (write \(k=\pi(q)\)) we have, using $(g_i\circ\sigma_{ij})(k)=g_j(k)-v_{ij}(k)$,
\begin{equation*}
    g|_{K_j}(k) = \phi_j(k)g_j(k) +\sum_{i\neq j} \phi_i(k) (g_i\circ\sigma_{ij})(k) =g_j - \sum_{i\neq j} \phi_i(k)v_{ij}(k).
\end{equation*}
The correction term is independent of \(t\in\S^1\), hence the fibrewise derivative satisfies $\mathbf t(g)|_{K_j}=\mathbf{t}(g_j)=\hat f_j$, which is convex by construction. Therefore $g$ is a well-defined bundle immersion with convex fibrewise derivative. \hfill\qedsymbol

%%%%%%%%%%%%%%%%%%%%%%%%%%%%%%%%%%%%%%%%%%%%%%%%%%%%%%%%%%%
\section{Zoom-in argument: proof of \cref{thm:main_thm}}  \label{sec:zoooming}

Let $N$ be an oriented $\S^1$-bundle over the closed, oriented $3$-manifold $M$ and let $g: N \looparrowright TM$ be a convex bundle immersion. According to the local model \cref{lem:tangent_bundle}, we may assume that $(X,J) = (TM,J)$, with $J$ splitting-adapted and $M$ embedded as the zero section. 

For $\lambda>0$, consider the fibrewise dilations
\begin{equation*}
    \delta_\lambda: TM \to TM,
\end{equation*}
which scale each fibre by $\lambda$, and define a one-parameter family of convex bundle immersions by
\begin{equation*}
    g^\lambda \defi \delta_\lambda \circ g: N\looparrowright(TM,J).
\end{equation*}

The result will follow if we show that $\D^{g^\lambda}_J$ is Engel for all sufficiently small $\lambda > 0$. To this end, we prove the following claim.
\begin{claim} \label{claim:key}
    For every point $p\in M$, there exists $\lambda(p) > 0$ sufficiently small and a neighbourhood $U(p)$ of $E_p$ such that $\D^{g^\lambda}_J$ is Engel on $U(p)$ for all $0 < \lambda < \lambda(p)$.
\end{claim}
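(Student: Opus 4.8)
The plan is to work in the local coordinates provided by \cref{lem:local_coordinates}: base coordinates $x = (x_1,x_2,x_3)$ and fibre coordinates $y = (y_1,y_2,y_3)$ on $T\R^3 = \R^3_x \times \R^3_y$, with $J$ agreeing with $J_\std$ along the zero section $\R^3_x$. Since we already know from \cref{cor:ConvexBundleImmersion} that the derived prolonged distribution $\D_\derprol^g$ is Engel, and the Engel condition is $C^2$-open, it suffices to show that, on a fixed neighbourhood $U(p)$ of the fibre $E_p$, the rescaled complex tangencies $\D_{J}^{g^\lambda}$ converge in $C^2$ to (a rescaled copy of) $\D_\derprol^g$ as $\lambda \to 0$. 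The key point is that $\D_\derprol^g$ is, by definition, the prolongation associated to the fibrewise derivative $\mathbf{t}(g)$, which lives entirely in the horizontal directions, and the splitting-adapted condition says precisely that $J$ sends horizontal to vertical to first order at the zero section.

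Concretely, I would first write down $\D_{J}^{g^\lambda}$ as a kernel of two $1$-forms (or equivalently as the span of two vector fields) in the $(x,y)$-coordinates, using that $g^\lambda(q) = \delta_\lambda(g(q))$. Away from the zero section one has $g^\lambda_*(TN) \cap J(g^\lambda_*(TN))$; intersecting the $4$-dimensional image with its $J$-image inside $\C^3$ is, generically, a $2$-plane, and one should expand the defining equations in powers of $\lambda$. Because the base point of the fibre stays fixed under $\delta_\lambda$ while the fibre coordinate is scaled by $\lambda$, after pulling everything back by $\delta_\lambda$ (i.e. rescaling $y \mapsto \lambda y$, which is a diffeomorphism onto a fixed neighbourhood of $E_p$) the complex structure $J$ becomes $J_\lambda := \delta_\lambda^* J$, which by the splitting-adapted hypothesis satisfies $J_\lambda \to J_0$ in $C^\infty$ on compact sets, where $J_0$ is a constant-coefficient model exchanging $\mathrm{Hor}$ and $\mathrm{Vert}$. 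For this limiting $J_0$ one computes directly that the complex tangencies of (the unrescaled) $g$ are exactly $\D_\derprol^g$: the horizontal direction $[(g_p)'(t)]$ together with its $J_0$-image (a vertical direction) assemble to a plane whose projection to $TM$ is the line spanned by $(g_p)'$, which is the definition of the prolonged distribution $\D_\prol^{\mathbf{t}(g)}$.

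Having identified the limit, I would invoke continuous dependence: the map $\lambda \mapsto \D_{J_\lambda}^{g}$ (now all living on the fixed neighbourhood $U(p)$ obtained after rescaling) is continuous in the $C^2$-topology on distributions, with $\D_{J_0}^{g} = \D_\derprol^g$ Engel there. Hence there is $\lambda(p) > 0$ so that $\D_{J_\lambda}^{g}$ — and therefore $\D_{J}^{g^\lambda}$, which is diffeomorphic to it via $\delta_\lambda$ — is Engel on $U(p)$ for all $0 < \lambda < \lambda(p)$. One technical wrinkle: $\D_J^{g^\lambda}$ is a priori only well-defined where $g^\lambda$ is co-real, so I would check that the plane of complex tangencies has constant rank $2$ for small $\lambda$ (this also follows from the $C^2$-convergence to the rank-$2$ distribution $\D_\derprol^g$, using that co-reality is an open condition).

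The main obstacle I anticipate is the bookkeeping in the expansion of $g^\lambda_*(TN) \cap J(g^\lambda_*(TN))$: one must verify that the leading-order term in $\lambda$ is nondegenerate (so that the intersection stays a clean $2$-plane rather than jumping rank) and that it reproduces $\mathbf{t}(g)$ rather than some other line. This is where the convexity of $g$ — hence the immersion property of $\mathbf{t}(g)$, so $(g_p)'(t) \neq 0$ — is used, and where one needs the splitting-adapted normalisation of $J$ to ensure no unexpected horizontal contributions survive in the limit. Once the leading term is pinned down, the rest is a soft continuity/openness argument.
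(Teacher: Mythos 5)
There is a genuine gap, and it comes down to the choice of rescaling. You pull back by the \emph{fibre-only} dilation $\delta_\lambda:(x,y)\mapsto(x,\lambda y)$, whereas the argument actually requires scaling the base simultaneously: the paper uses $\Delta_\lambda:(a,b)\mapsto(\lambda a,\lambda b)$. This single difference causes two steps in your outline to fail.

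First, $\delta_\lambda^* J$ does \emph{not} converge as $\lambda\to 0$. Since $(\delta_\lambda)_*=\mathrm{diag}(I,\lambda I)$ in the $\mathrm{Hor}\oplus\mathrm{Vert}$ splitting, conjugation by it multiplies the $\mathrm{Vert}\to\mathrm{Hor}$ block of $J$ by $\lambda^{-1}$. The splitting-adapted hypothesis says this block equals the identity along the zero section, so $\delta_\lambda^* J$ has a term $\lambda^{-1}\,\mathrm{Id}$ that blows up. By contrast, $(\Delta_\lambda)_*=\lambda\,\mathrm{Id}$ is scalar, so $\Delta_\lambda^* J$ is just $J$ evaluated at a point closer to the origin, which genuinely converges to $J_\std$.

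Second, even granting a limiting constant $J_0$, the pulled-back immersion under $\delta_\lambda$ is still the original $g(x,t)=(x,\gamma(x,t))$, which is base-dependent. Your identification ``complex tangencies of $g$ with respect to $J_0$ equals $\D_\derprol^g$'' is only correct when $\partial_x\gamma\equiv 0$: the computation you sketch (horizontal line $[(g_p)'(t)]$ plus its vertical $J_0$-image project to $\langle (g_p)'\rangle$) silently discards the contributions $\partial_{x_i}\gamma\,\partial_y$ to $g_*TN$, and for base-dependent $g$ these really do perturb the intersection $g_*TN\cap J_0(g_*TN)$ away from $\D_\derprol^g$. The paper isolates this statement precisely as a lemma \emph{for base-independent immersions}. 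The role of the base rescaling in $\Delta_\lambda$ is exactly to kill this dependence: it turns $\gamma(\lambda a,t)$ into $\gamma(0,t)$ in the limit, so that both the immersion becomes base-independent \emph{and} the almost complex structure becomes standard, and only then does the $C^2$-openness argument close.

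Your overall architecture (local coordinates, $C^2$-openness of Engel, identify a limit distribution, continuous dependence) matches the paper. What is missing is the observation that zooming in both base and fibre is needed: zooming only in the fibre neither tames $J$ nor removes the base dependence of $g$.
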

Once the claim is established, \cref{thm:main_thm} follows immediately by a standard compactness argument.

Since the claim is local, choose coordinates as in \cref{lem:local_coordinates}:
\begin{align*}
    \R^3_x \times \R^3_y &\rightarrow T\R^3_x\\
    (x_i,y_i) &\mapsto \sum y_i\cdot \partial_{x_i},
\end{align*}
with $J$ splitting-adapted along $\R^3_x$ and satisfying $J = J_\std$ along the zero section. In these coordinates, the family of convex bundle immersions has the form
\begin{align*}
    g^\lambda:\R^3_x\times \S^1&\looparrowright \R^3_x\times\R^3_y  \\
    (x,t) &\mapsto \big(x, \lambda \gamma(x,t)\big),
\end{align*}
for some $\gamma: \R^3_x \times \S^1 \to \R^3_y$. 

We now introduce a special case of bundle immersion that will be important in the proof of the claim.
\begin{definition}\label{def:BaseIndependent} 
A bundle immersion $g(x,t) = (x,\gamma(x,t)): \R_x^3 \times \S^1\looparrowright \R^3_x\times \R^3_y$ is called \emph{base-independent} if $\gamma$ does not depend on $x$. Equivalently, $\frac{\partial}{\partial x}\gamma \equiv 0$. 
\end{definition}

\begin{proof}[Proof of \cref{claim:key} and \cref{thm:main_thm}] 
We divide the proof into two steps. First, we treat the case where $g$ is base-independent and $J \equiv J_\std$ is the standard complex structure. Then, we reduce the general case to this one by using a \emph{zooming-in} argument.

\underline{Step I: Base-independent case.} We observe:
\begin{lemma}\label{lem:BaseIndependent}
    If $g$ is base-independent and $J \equiv J_\std$ on $\C^3 = \R^3_x \times \R^3_y$, then
    \begin{equation*}
        \D^g_\derprol=\D_{J_\std}^g
    \end{equation*}
    on $\R^3_x\times \S^1$.
\end{lemma}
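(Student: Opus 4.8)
The plan is to compute both distributions explicitly in the coordinates $\R^3_x \times \S^1 \looparrowright \R^3_x \times \R^3_y = \C^3$ and check they agree pointwise. Since $g$ is base-independent, write $g(x,t) = (x,\gamma(t))$ with $\gamma: \S^1 \looparrowright \R^3$ a fixed convex immersed curve; the image of $g$ is the submanifold $\Sigma = \{(x,\gamma(t))\}$ with tangent space at $q = (x,\gamma(t))$ spanned by $\partial_{x_1}, \partial_{x_2}, \partial_{x_3}$ and the vector $\mathbf{v}(t) := \sum_i \gamma_i'(t)\,\partial_{y_i}$, which is vertical. So $T_q\Sigma = \mathrm{Hor} \oplus \R\langle \mathbf{v}(t)\rangle$ where $\mathrm{Hor} = \R\langle \partial_{x_1},\partial_{x_2},\partial_{x_3}\rangle$.

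First I would compute $\D^g_\derprol$. By \cref{def:DerivedProlonged} and the definition of the prolonged distribution, $\D^g_\derprol(q) = (\d\pi_q)^{-1}(L_q)$ where $\pi: \Sigma \to M = \R^3_x$ is the projection (here $\pi = \delta_\lambda^{-1}$-style, just $(x,\gamma(t))\mapsto x$, i.e. the $\S^1$-bundle projection), and $L_q \subseteq T_{\pi(q)}\R^3_x = \R^3$ is the oriented line spanned by $\mathbf{t}(g)(q) = [\gamma'(t)]$, viewed under the identification $\mathrm{Hor} \simeq TM$. Concretely, $\ker \d\pi_q$ is the fibre direction $\R\langle \mathbf{v}(t)\rangle$, and $\D^g_\derprol(q)$ is spanned by $\mathbf{v}(t)$ together with the horizontal lift of $\gamma'(t)$, namely $\mathbf{h}(t) := \sum_i \gamma_i'(t)\,\partial_{x_i}$. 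So $\D^g_\derprol(q) = \R\langle \mathbf{h}(t),\ \mathbf{v}(t)\rangle$.

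Next I would compute $\D^g_{J_\std}(q) = T_q\Sigma \cap J_\std(T_q\Sigma)$. We have $J_\std \partial_{x_i} = \partial_{y_i}$ and $J_\std \partial_{y_i} = -\partial_{x_i}$, so $J_\std(T_q\Sigma) = J_\std\,\mathrm{Hor} \oplus \R\langle J_\std \mathbf{v}(t)\rangle = \R\langle \partial_{y_1},\partial_{y_2},\partial_{y_3}\rangle \oplus \R\langle -\mathbf{h}(t)\rangle = \mathrm{Vert} \oplus \R\langle \mathbf{h}(t)\rangle$. Intersecting: a vector $a\,\mathbf{h}(t) + \mathbf{w}$ with $\mathbf{w} \in \mathrm{Vert}$ lies in $T_q\Sigma = \mathrm{Hor}\oplus\R\langle\mathbf{v}(t)\rangle$ iff its horizontal part $a\,\mathbf{h}(t)$ is arbitrary (it is in $\mathrm{Hor}$, fine) and its vertical part $\mathbf{w}$ is a multiple of $\mathbf{v}(t)$. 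Hence $\D^g_{J_\std}(q) = \R\langle \mathbf{h}(t),\ \mathbf{v}(t)\rangle$, which matches $\D^g_\derprol(q)$. Since the identification $\mathrm{Hor}\simeq TM \simeq \mathrm{Vert}$ built into the splitting-adapted structure is exactly the one making $\mathbf{h}(t) \leftrightarrow \gamma'(t) \leftrightarrow \mathbf{v}(t)$, one must double-check that the line $L_q$ used in $\D^g_\derprol$ and the "$\gamma'(t)$-direction" appearing above are the same; this is immediate here because $J_\std$ is the splitting identity along the zero section, but I would state it carefully.

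The computation is essentially routine linear algebra fibrewise, so there is no serious obstacle; the only subtlety worth care is the bookkeeping of the two copies of $TM$ in $T(TM)|_M \simeq \mathrm{Hor}\oplus\mathrm{Vert}$ and the sign/orientation conventions in $J_\std$, making sure that "horizontal lift of $\gamma'$" and "$J_\std$ applied to the vertical tangent of $\Sigma$" land on the same line rather than differing by the ambient $J$ off the zero section — but since base-independence makes $\gamma$ constant in $x$, $\Sigma$ is genuinely a product in these coordinates and the identification is exact, not just first-order along $M$. I would therefore organize the proof as: (1) identify $T_q\Sigma$ and its pushforward/pullback descriptions; (2) compute $\D^g_\derprol(q)$ as $\mathrm{span}(\mathbf{h}(t),\mathbf{v}(t))$; (3) compute $J_\std(T_q\Sigma)$ and intersect to get the same span; (4) conclude equality of distributions on $\R^3_x\times\S^1$.
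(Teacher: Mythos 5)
Your proposal is correct and follows essentially the same route as the paper: both compute $g_*TN = \mathrm{Hor}\oplus\R\langle\mathbf{v}(t)\rangle$ explicitly, apply $J_\std$ using $J_\std\partial_{x_i}=\partial_{y_i}$, intersect to get $\R\langle\mathbf{h}(t),\mathbf{v}(t)\rangle$, and pull back to $\langle\sum_i\gamma_i'(t)\partial_{x_i},\,\partial_t\rangle$, matching the definition of $\D^g_{\derprol}$. The only difference is cosmetic (you spell out the intersection step a little more and flag the $\mathrm{Hor}\simeq TM\simeq\mathrm{Vert}$ bookkeeping explicitly), and the stray remark ``$\pi=\delta_\lambda^{-1}$-style'' is irrelevant, but harmless.
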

\begin{proof}
By hypothesis, $g(x,t) = (x,\gamma(t))$ so the differential of $g$ is the map $
g_* = \operatorname{Id}_{T\R^3_x} \times (\gamma)_*$. I.e., the image of $g_*$ at $(x,t)$ is $T\R^3_x \oplus \langle \gamma'(t) \rangle \subseteq T\R^3_x \times T\R^3_y$. The element $J_\std(\gamma'(t))$ is contained in $T\R^3_x$ and is identified with $-\gamma'(t)$ under the canonical identification $T\R^3_x \simeq T\R^3_y$. It follows that the complex tangency, pulled back to the domain, is $\langle \gamma'(t),\partial_t \rangle$. This is exactly how $\D_\derprol^g(x,t)$ was defined.
\end{proof}
In particular, since $g$ was chosen to be a convex bundle immersion, we deduce (\cref{prop:conv_is_engel}) that $\D_{J_\std}^g = \D_\derprol^g$ is Engel in the base-independent, $J = J_\std$ case.

\underline{Step II: The zooming argument.} We now reduce the general claim to the previous case. At an informal level, our argument is based on the fact that rescaling the base by $\lambda\to 0$ makes the bundle immersion closer to being base-independent, whereas rescaling base and fibre simultaneously forces the pulled-back almost complex structure to converge to the standard one.

Consider the family of ``zooming'' coordinate changes:
\begin{align*}
    \Delta_\mu: \R^3_a \times \R^3_b &\to \R^3_x\times \R^3_y,\\
    (a,b)&\mapsto (\mu a,\mu b):
\end{align*}
for $\mu>0$. 

In the coordinates given by $\Delta_\lambda$ (i.e., $\mu=\lambda)$, the bundle immersion $g^\lambda$ becomes:
\begin{equation*}
    \tilde{g}^\lambda(a,t) =(\Delta_\lambda)^{-1}g^\lambda(\lambda a,t ) = (a, \gamma(\lambda a,t)).
\end{equation*}
while the almost complex structure $J$ transforms as:
\begin{equation*}
J^\lambda_{(a,b)}(V) := (\Delta_\lambda)^*J (V)= (\Delta_\lambda)^{-1}\big(J_{(\lambda a,\lambda b)}(\lambda V)\big) =  J_{(\lambda a,\lambda b)}(V),\quad \forall\ V\in \R^3_a\times \R^3_b,
\end{equation*}
which remains splitting-adapted by construction.

\begin{lemma}
The pairs $(\tilde{g}^\lambda, J^\lambda)$ converge to $(\tilde{g}^0,J_\std)$ in the weak $C^\infty$-topology, where $\tilde{g}^0(x,t)=(x,\gamma(0,t))$ is a base-independent bundle immersion. 
\end{lemma}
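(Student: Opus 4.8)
The plan is to prove the convergence componentwise, treating the bundle immersion and the almost complex structure separately, since both limits are of an elementary nature.

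\textbf{Convergence of the bundle immersions.} First I would observe that $\tilde{g}^\lambda(a,t) = (a,\gamma(\lambda a, t))$ depends on $\lambda$ only through the smooth map $\gamma : \R^3_x \times \S^1 \to \R^3_y$, precomposed with the dilation $a \mapsto \lambda a$ in the first slot. Thus, on any compact set $K \times \S^1 \subseteq \R^3_a \times \S^1$, we need to show that $(a,t) \mapsto \gamma(\lambda a, t)$ converges to $(a,t)\mapsto \gamma(0,t)$ together with all derivatives. For a fixed multi-index in $(a,t)$, each $a$-derivative brings down a factor of $\lambda$ (by the chain rule) times a derivative of $\gamma$ evaluated at $(\lambda a, t)$; since $a$ ranges over the compact set $K$, the point $\lambda a$ stays in a fixed compact neighbourhood of $0$ for $\lambda \le 1$, so all the relevant derivatives of $\gamma$ are uniformly bounded there. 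Hence the derivatives involving at least one $a$-direction are $O(\lambda)$ and vanish in the limit, while the purely $t$-derivatives converge to those of $\gamma(0,\cdot)$ by continuity of $\partial_t^k \gamma$. This is exactly $C^\infty$-convergence on compacta, i.e.\ in the weak $C^\infty$-topology. It is also clear that $\tilde{g}^0(a,t) = (a,\gamma(0,t))$ is base-independent in the sense of \cref{def:BaseIndependent}, and it remains a bundle immersion since $\gamma(0,\cdot)$ is an immersion (being a fibre of the original immersion $g$).

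\textbf{Convergence of the almost complex structures.} Next I would unwind the formula $J^\lambda_{(a,b)}(V) = J_{(\lambda a, \lambda b)}(V)$, which was already derived above using the linearity of $\Delta_\lambda$ (the factors of $\lambda$ cancel). So $J^\lambda$ is literally the pullback of the fixed smooth tensor field $J$ under the dilation $\Delta_\lambda$, and as $\lambda \to 0$ the base point $(\lambda a, \lambda b)$ converges to the origin $(0,0)$, which lies on the zero section $\R^3_x$. Since $J$ is splitting-adapted, $J_{(0,0)} = J_\std$ by \cref{lem:local_coordinates}. The same compactness argument applies: on a compact set, $(\lambda a, \lambda b)$ stays in a fixed compact neighbourhood of the origin for $\lambda \le 1$, and smoothness of $J$ gives that $J^\lambda$ and all its derivatives converge uniformly there to the corresponding derivatives of the constant tensor $J_\std$ at the origin — for the derivatives in the $(a,b)$-directions this again produces an explicit factor of $\lambda$.

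\textbf{Main obstacle.} The only genuinely delicate point is bookkeeping the chain-rule factors to confirm that every mixed derivative of order $\ge 1$ in the rescaled base variables really does carry a positive power of $\lambda$, so that it is suppressed in the limit; the rest is a routine uniform-bound-on-compacta argument. I would state this cleanly once (e.g.\ ``$\partial_a^{\alpha}\partial_b^{\beta}\partial_t^{k}\big(J^\lambda - J_\std\big) = \lambda^{|\alpha|+|\beta|}\,(\partial^{\alpha}\partial^{\beta}\partial_t^k J)(\lambda a, \lambda b) - \delta_{|\alpha|+|\beta|,0}\,\partial_t^k J_\std$, which is $O(\lambda)$ uniformly on compacta when $|\alpha|+|\beta|\ge 1$ and tends to $\partial_t^k J_\std - \partial_t^k J_\std = 0$ when $|\alpha|+|\beta|=0$'') and similarly for $\tilde g^\lambda$, then conclude. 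This completes the lemma, and combined with the $C^2$-openness of the Engel condition (Step I gives that $\D^{\tilde g^0}_{J_\std} = \D^{\tilde g^0}_{\derprol}$ is Engel) it yields \cref{claim:key}.
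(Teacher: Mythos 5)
Your proposal is correct and takes essentially the same route as the paper's proof, which simply invokes Taylor's theorem for $\gamma(\lambda a,t)-\gamma(0,t)$ and says the $J^\lambda$ part ``follows similarly''; your chain-rule-plus-uniform-bounds-on-compacta bookkeeping is just the explicit unwinding of that one-liner. The only cosmetic slip is writing $\partial_t^k J$ in your summary formula — $J$ is a tensor on $\R^3_a\times\R^3_b$ and has no $t$-dependence, so those terms vanish identically — but this does not affect the argument.
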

\begin{proof}
We compute
    \begin{equation*}
        (\tilde{g}^\lambda - \tilde{g}^0) (a,t) = (0,\gamma(\lambda a,t)-\gamma(0,t)).
    \end{equation*}
Expanding $\gamma(\lambda a,t)$ in the first variable around $0$ via Taylor's theorem yields the desired convergence. The convergence of $J^\lambda$ to $J_\std$ follows similarly.
\end{proof}

By the definition of the weak Whitney $C^\infty$ topology, $\D^{\tilde{g}^\lambda}_{J^\lambda}$ is $C^\infty$-close to $\D^{\tilde{g}^0}_{J_\std}$ near the origin. Since $\tilde{g}^0$ is base-independent, Step~I implies that $\D^{\tilde{g}^0}_{J_\std}$ is Engel. Because the Engel condition is $C^2$-open, $\D^{\tilde{g}^\lambda}_{J^\lambda}$ is also Engel for $\lambda$ small enough. This proves \cref{claim:key}, and therefore \cref{thm:main_thm}.
\end{proof}

%%%%%%%%%%%%%%%%%%%%%%%%%%%%%%%%%%%%%%%%%%%%%%%%%%%%%
\section{Proof of \cref{thm:main_result} and \cref{cor:KnottedCREngelEmbeddings}} \label{sec:proofMain}

The last ingredient we need is Gromov's $h$-principle for totally real embeddings, combined with the fact, due to Forstnerič \cite{forstnericTotallyRealEmbeddings1986, gromovPartialDifferentialRelations1986}, that any embedding of a $3$-manifold into $\C^3$ is formally totally real.
\begin{proposition} \label{thm:every_3fold_TR_emb}
Any embedding $\iota: M \hookrightarrow \C^3$ is isotopic, through embeddings, to a totally real embedding.
\end{proposition}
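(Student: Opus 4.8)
The statement is a combination of two ingredients already quoted in the excerpt. First, Forstnerič's theorem tells us that any embedding $\iota : M^3 \hookrightarrow \C^3$ is \emph{formally} totally real: there is a bundle monomorphism $F : TM \to T\C^3|_{\iota(M)}$, homotopic to $d\iota$ through monomorphisms, whose image contains no $J_\std$-complex line. In fact the dimension count is what makes this work: a totally real subbundle of rank $3$ inside $\C^3$ always exists over a $3$-complex because the relevant obstruction classes vanish (the relevant Stiefel-type fibre is highly connected relative to $\dim M = 3$), and one checks that the formal data can be chosen homotopic to the genuine differential $d\iota$ since $GL(6,\R)/GL(3,\C)$ is $2$-connected. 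I would simply cite \cite{forstnericTotallyRealEmbeddings1986, gromovPartialDifferentialRelations1986} for this step rather than reprove it.

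Second, Gromov's $h$-principle for totally real embeddings (again \cite{gromovPartialDifferentialRelations1986}; see also \cite{cieliebak2024introduction}) upgrades a formal solution to a genuine one \emph{through embeddings}, in a parametric and $C^0$-close fashion. Concretely, the differential relation ``$d\tilde\iota$ has totally real image'' is open and, by Gromov's convex integration / removal-of-singularities machinery for immersions, satisfies the $h$-principle; the embedding refinement holds because one works in codimension $3 \geq 1$, where a $C^0$-small homotopy can be kept injective (this is the same mechanism by which the Whitney embedding/immersion theory produces embeddings rather than mere immersions in codimension $\geq 1$). So starting from the embedding $\iota$, whose differential is formally totally real by the first step, the $h$-principle produces a homotopy $\iota_s$, $s\in[0,1]$, of embeddings with $\iota_0 = \iota$ and $\iota_1$ genuinely totally real, and the homotopy can be taken $C^0$-small, hence an ambient isotopy.

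The only point requiring a word of care is matching the formal data: Gromov's $h$-principle starts from a \emph{formal solution}, i.e.\ a pair (embedding, monomorphism lifting its differential into the relation) rather than just an embedding. The first step supplies exactly such a pair over $\iota$, so the two ingredients dovetail. I would therefore write the proof as: (1) by \cite{forstnericTotallyRealEmbeddings1986} the embedding $\iota$ is formally totally real, i.e.\ $d\iota$ is homotopic through monomorphisms to one with totally real image; (2) by Gromov's parametric $h$-principle for totally real embeddings in codimension $\geq 1$ \cite{gromovPartialDifferentialRelations1986}, this formal homotopy is realized by an isotopy of embeddings ending at a totally real one.

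\textbf{Main obstacle.} There is no serious obstacle; this is a citation-level assembly. The one substantive issue to be precise about is that the $h$-principle for totally real \emph{embeddings} (as opposed to immersions) genuinely holds, which is why the codimension hypothesis $\dim_\C \C^3 - \dim_\R M = 3$ (equivalently, real codimension $3$) matters — it guarantees both that formal totally real monomorphisms exist over the $3$-complex $M$ and that the convex-integration homotopy can be kept embedded. I would flag this codimension condition explicitly and otherwise defer to the references.
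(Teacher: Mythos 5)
Your proposal is correct and takes the same route as the paper, which offers no proof beyond the sentence preceding the statement: it cites Forstneri\v{c} \cite{forstnericTotallyRealEmbeddings1986,gromovPartialDifferentialRelations1986} for the fact that any embedding $M^3 \hookrightarrow \C^3$ is formally totally real, and Gromov's $h$-principle for totally real embeddings \cite{gromovPartialDifferentialRelations1986,cieliebak2024introduction} to upgrade the formal solution to a genuine one through an isotopy of embeddings.

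One caveat worth flagging so you do not believe the parenthetical heuristic is a proof: the claim that the formal step is ``automatic by connectivity'' is false. The space $GL(6,\R)/GL(3,\C)$ is not $2$-connected (each component retracts to $SO(6)/U(3)\cong\CP^3$, which has $\pi_2=\Z$), and the space actually relevant here --- totally real $3$-planes in $\C^3$, namely $GL(3,\C)/GL(3,\R)\simeq U(3)/O(3)$ --- is not even simply connected, since $\det^2$ gives a fibration over $U(1)$. The vanishing of the obstruction is genuinely a theorem of Forstneri\v{c} and requires his argument; it cannot be dismissed by a naive connectivity count. Since you ultimately defer to the citations, the proof you propose stands, but the explanatory remark should be deleted or corrected.
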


\begin{proof}[Proof of \cref{thm:main_result}]
Let $N$ be an oriented $\S^1$-bundle over a closed, oriented $3$-manifold $M$. By \cref{thm:every_3fold_TR_emb}, we can find a totally real embedding $M \hookrightarrow \C^3$ in any given isotopy class. According to \cref{prop:EngelDerivedProlongation} we can construct a convex bundle immersion $g:N \looparrowright TM$ (an embedding when $N$ is trivial). Then \cref{thm:main_thm} applies, yielding the desired co-real immersion whose complex tangencies are Engel.
\end{proof}

\begin{remark}\label{remark:Conjecture}
When $e(N)$ is even, \cref{lem:S1-bundle_immersed_in_STM} provides a bundle embedding $g:N\hookrightarrow TM$. We conjecture that a convex bundle embedding also exists. This would produce embeddings of more general $\S^1$-bundles in \cref{thm:main_result}, and is left as an open question. Alternatively, one may produce a convex bundle immersion and try to achieve the fibrewise embedding condition.
\end{remark}

\begin{proof}[Proof of \cref{cor:KnottedCREngelEmbeddings}]
    The proof follows by applying the construction above to an infinite family of totally real embeddings $i_k:M^3\hookrightarrow \C^3$, $k\in \Z$, pairwise not smoothly isotopic.

    The existence of such a family relies on the works of Haefliger and Skopenkov \cite{Haefliger1,Haefliger2,Skopenkov}. Indeed, in view of the $h$-principle from \cref{thm:every_3fold_TR_emb} it is enough to find an infinite family of smooth embeddings pairwise not smoothly isotopic to conclude. In the case $M=\S^3$, the existence of a family of embeddings $i_k:\S^3\hookrightarrow \C^3$, $k\in \Z$, pairwise not smoothly isotopic follows from \cite{Haefliger1,Haefliger2}. This proves the result for $M=\S^3$. 
    
    For the general case, consider a smooth embedding $f:M^3\hookrightarrow \R^5\subseteq \C^3$, which exists because of \cite{Wall}. We may assume that $f$ is disjoint from all Haefliger's spheres $i_k$. Then, the embeddings $f\#i_k:M\hookrightarrow \C^3$, $k\in \Z$, obtained by ambient connected sum of $f$ and $i_k$, are pairwise not smoothly isotopic as proved in \cite{Skopenkov}. This concludes the argument. 
\end{proof}

\bibliographystyle{alpha}
\bibliography{references.bib}

\end{document}